\title{Integral formula and upper estimate of\\ I and J-Bessel functions on Jordan algebras}
\author{Ryosuke Nakahama\thanks{Email: nakahama@ms.u-tokyo.ac.jp} \\
\textit{Graduate School of Mathematical Sciences, the University of Tokyo,} \\
\textit{3-8-1 Komaba Meguro-ku Tokyo 153-8914, Japan}}
\date{\today}
\newtheorem{theorem}{Theorem}[section]
\newtheorem*{theorem*}{Theorem}
\newtheorem{Definition}[theorem]{Definition}
\newtheorem{proposition}[theorem]{Proposition}
\newtheorem{corollary}[theorem]{Corollary}
\newtheorem*{corollary*}{Corollary}
\newtheorem{lemma}[theorem]{Lemma}
\newtheorem{remark}[theorem]{Remark}
\newcommand{\Z}{\mathbb{Z}}
\newcommand{\R}{\mathbb{R}}
\newcommand{\C}{\mathbb{C}}
\renewcommand{\H}{\mathbb{H}}
\newcommand{\Oct}{\mathbb{O}}
\renewcommand{\Re}{\operatorname{Re}}
\renewcommand{\Im}{\operatorname{Im}}
\newcommand{\tr}{\operatorname{tr}}
\newcommand{\Tr}{\operatorname{Tr}}
\newcommand{\Det}{\operatorname{Det}}
\newcommand{\rank}{\operatorname{rank}}
\begin{document}
\maketitle

\begin{abstract}
In this paper we give a new integral expression of I and J-Bessel functions on simple Euclidean Jordan algebras, 
integrating on a bounded symmetric domain. From this we easily get the upper estimate of Bessel functions. 
As an application we give an upper estimate of the integral kernel function of 
the holomorphic 1-dimensional semi-group acting on the space of 
square integrable functions on symmetric cones. \bigskip

\noindent \textbf{Keywords}:
Euclidean Jordan algebras; Bessel functions; holomorphic discrete series representations; holomorphic semigroups. 
\\ \textbf{AMS subject classification}:
33C10; 33C67; 17C30; 22E45; 47D06. 
\end{abstract}

\section{Introduction and main results}
In this paper we find in Theorem \ref{integral} a new integral expression of I and J-Bessel functions 
$\mathcal{I}_{\lambda}(x)$, $\mathcal{J}_{\lambda}(x)$ on a Jordan algebra $V$. 
J-Bessel functions are first introduced by Faraut and Travaglini \cite{FT} for special cases, 
associating to self-adjoint representations of Jordan algebras (see also (\ref{repBessel})), 
and generalized by Dib \cite{D} (for $V=\mathrm{Sym}(r,\R)$ case see also \cite{He} and \cite{Mu}). It is well-known that 
$\mathcal{I}_{\lambda}(x)$, $\mathcal{J}_{\lambda}(x)$ are the holomorphic functions on $V^{\C}$ for $\lambda$ 
in open dense subset of $\C$. On the other hand, for countable singular $\lambda$ they are still well-defined 
on certain subvarieties. These are defined by the series expansion (see Section \ref{main}), 
and satisfy the following differential equation
\[ \mathcal{B}_{\lambda}\mathcal{I}_{\lambda}-e\mathcal{I}_{\lambda}=0,\qquad
\mathcal{B}_{\lambda}\mathcal{J}_{\lambda}+e\mathcal{J}_{\lambda}=0 \]
where $\mathcal{B}_{\lambda}:C^2(V)\to C(V)\otimes V^{\C}$ is the $V^\C$-valued 2nd order differential operator 
defined in \cite[Section XV.2]{FK}, 
and $e$ is the unit element on $V$ (see \cite[Proposition 1.7]{D} or \cite[Theorem XV.2.6]{FK}). 
Also $\mathcal{I}_{\lambda}$ and $\mathcal{J}_{\lambda}$ have the following integral expression
\begin{gather}\label{tubeint}
\mathcal{I}_{\lambda}(x)=\frac{\Gamma_{\Omega}(\lambda)}{(2i\pi)^n}\int_{e+iV}e^{\tr w}e^{(w^{-1}|x)}\Delta(w)^{-\lambda}dw, \\
\mathcal{J}_{\lambda}(x)=\frac{\Gamma_{\Omega}(\lambda)}{(2i\pi)^n}\int_{e+iV}e^{\tr w}e^{-(w^{-1}|x)}\Delta(w)^{-\lambda}dw
\end{gather}
(see \cite[D\'efinition 1.2]{D} or \cite[Theorem XV.2.2]{FK}. For notations $\tr$, $(\cdot|\cdot)$, $\Delta$ and 
$\Gamma_{\Omega}(\lambda)$ see Section \ref{Jordan} and (\ref{Gamma})). 
There are some attempts to generalize these Bessel functions to operator-valued ones 
(see $e.g.$ \cite{DG} and references therein), but it is still not very well-understood. 
In this paper we only treat scalar-valued ones. 

Now we briefly state our theorem. Let $V$ be a simple Euclidean Jordan algebra 
($i.e.$, $V$ is one of the $\mathrm{Sym}(r,\R)$, $\mathrm{Herm}(r,\C)$, 
$\mathrm{Herm}(r,\H)$, $\R^{1,n-1}$ or $\mathrm{Herm}(3,\Oct)$). We assume $\dim V=n$, $\rank V=r$. We prove 

\begin{theorem}
For $\lambda\in\C$, $x\in\overline{\mathcal{X}_{\rank\lambda}}$ (see (\ref{subvar}) and (\ref{ranklambda})), 
take $k\in\Z_{\ge 0}$ such that $\Re\lambda+k>\frac{2n}{r}-1$. Then, we have the integral expressions
\begin{gather*}
\mathcal{I}_{\lambda}\left(x^2\right)
=c_{\lambda+k}\int_D{}_1F_1(-k,\lambda;-x,w)e^{2(x|\Re w)}h(w,w)^{\lambda+k-\frac{2n}{r}}dw, \\
\mathcal{J}_{\lambda}\left(x^2\right)
=c_{\lambda+k}\int_D{}_1F_1(-k,\lambda;-ix,w)e^{2i(x|\Re w)}h(w,w)^{\lambda+k-\frac{2n}{r}}dw, 
\end{gather*}
where $c_{\lambda}$ is a constant and ${}_1F_1(-k,\lambda;x,w)$ is a polynomial of degree $rk$ 
with respect to both $x$ and $w$. 
\end{theorem}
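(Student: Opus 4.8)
Here is the plan. By analytic continuation it suffices to prove the first identity for real $x\in V$ and for $\Re\lambda$ large, and then let the parameters move. Indeed, when $\Re\lambda+k>\frac{2n}{r}-1$ the weight $h(w,w)^{\lambda+k-\frac{2n}{r}}$ is integrable on $D$, while ${}_1F_1(-k,\lambda;-x,w)$ is polynomial in $w$ and $|e^{2(x|\Re w)}|\le e^{C\|\Re x\|}$ on the bounded set $D$; hence the right–hand side converges, is entire in $x\in V^{\C}$, and is holomorphic in $\lambda$ on the strip $\{\Re\lambda+k>\frac{2n}{r}-1\}$. Since $\mathcal{I}_\lambda(x^2)$ is entire in $x$ and holomorphic in $\lambda$ off the singular set, it is enough to establish the formula on a suitable open subset (say $\Re\lambda>\frac{2n}{r}-1$, $x$ real) and continue; the values on $\overline{\mathcal{X}_{\rank\lambda}}$ at singular $\lambda$ then follow by continuity. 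The $\mathcal{J}$–formula is obtained from the $\mathcal{I}$–formula by the substitution $x\mapsto ix$, using $\mathcal{J}_\lambda(y)=\mathcal{I}_\lambda(-y)$, since this replaces ${}_1F_1(-k,\lambda;-x,w)$ by ${}_1F_1(-k,\lambda;-ix,w)$ and $e^{2(x|\Re w)}$ by $e^{2i(x|\Re w)}$.

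Next I would treat the base case $k=0$, where ${}_1F_1(0,\lambda;-x,w)=1$. For real $x$ one has $e^{2(x|\Re w)}=|e^{(x|w)}|^2$, so the integral $\int_D |e^{(x|w)}|^2 h(w,w)^{\lambda-\frac{2n}{r}}\,dw$ is, up to a $\lambda$–dependent constant, the squared norm of the function $w\mapsto e^{(x|w)}$ in the weighted Bergman space $A^2_\lambda(D)$. Decomposing $e^{(x|\cdot)}$ into its components in the Peter–Weyl/Schmid pieces $\mathcal{P}_{\mathbf{m}}(V^{\C})$, using that the $A^2_\lambda(D)$–inner product restricted to $\mathcal{P}_{\mathbf{m}}$ equals $(\lambda)_{\mathbf{m}}^{-1}$ times the Fischer inner product, and using the standard evaluation $\|P_{\mathbf{m}}e^{(x|\cdot)}\|^2_{\mathrm{Fischer}}=\frac{d_{\mathbf{m}}}{|\mathbf{m}|!}\frac{\Phi_{\mathbf{m}}(x^2)}{\Phi_{\mathbf{m}}(e)}$, one gets
\[
\int_D e^{2(x|\Re w)}h(w,w)^{\lambda-\frac{2n}{r}}\,dw \;=\; (\text{const})\sum_{\mathbf{m}\ge 0}\frac{1}{(\lambda)_{\mathbf{m}}}\,\frac{d_{\mathbf{m}}}{|\mathbf{m}|!}\,\frac{\Phi_{\mathbf{m}}(x^2)}{\Phi_{\mathbf{m}}(e)} \;=\; (\text{const})\,\mathcal{I}_\lambda(x^2),
\]
which is the case $k=0$. (Alternatively one can start from the tube integral (\ref{tubeint}) and a Cayley-type change of variables, but the Bergman computation is cleaner.)

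For general $k$ the point is that multiplying the $A^2_{\lambda+k}(D)$–weight by the polynomial ${}_1F_1(-k,\lambda;-x,\cdot)$ exactly compensates the shift $\lambda\mapsto\lambda+k$, i.e. the same pairing again produces $\mathcal{I}_\lambda(x^2)$. I would prove this by induction on $k$: a Rodrigues-type formula for the Laguerre-type polynomials ${}_1F_1(-k,\lambda;\cdot)$ (the Jordan-algebra analogue of $e^{-t}t^{\alpha}L^{(\alpha)}_k(t)=\tfrac1{k!}\tfrac{d^k}{dt^k}\!\bigl(e^{-t}t^{k+\alpha}\bigr)$, transported from the cone $\Omega$ to $D$) should express ${}_1F_1(-(k+1),\lambda;-x,w)\,e^{(x|w)}h(w,w)^{\lambda+k+1-\frac{2n}{r}}$ as a \emph{holomorphic} shift operator applied to ${}_1F_1(-k,\lambda;-x,w)\,e^{(x|w)}h(w,w)^{\lambda+k-\frac{2n}{r}}$; integration by parts on $D$, together with the fact that a holomorphic differential operator kills the antiholomorphic factor $\overline{e^{(x|w)}}$, then reduces the $(k+1)$–integral to the $k$–integral, up to a constant which is absorbed in $c_{\lambda+k}$, and the base case finishes the induction. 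Equivalently, expanding the whole integrand into spherical polynomials reduces the claim to a Pochhammer-symbol summation identity of Chu–Vandermonde type, $\sum_{\mathbf{j}}\frac{(-k)_{\mathbf{j}}(-1)^{|\mathbf{j}|}}{(\lambda)_{\mathbf{j}}}(\cdots)=\frac{(\lambda+k)_{\mathbf{m}}}{(\lambda)_{\mathbf{m}}}(\cdots)$. This last step is the main obstacle: since the Clebsch–Gordan coefficients occurring in the product ${}_1F_1(-k,\lambda;-x,\cdot)\,e^{(x|\cdot)}$ have no individual closed form, the identity has to be extracted indirectly — either from the Rodrigues formula via a careful integration by parts, or from the known behaviour of the Jordan-algebra ${}_1F_1$ under the Bessel/shift operators. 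The remaining ingredients (justifying term-by-term integration, evaluating the beta-type integrals $\int_D\Phi_{\mathbf{m}}(\cdots)h(w,w)^{\nu-\frac{2n}{r}}dw$ via the reproducing kernel $h(z,w)^{-\nu}$, and checking the degree $rk$ of ${}_1F_1(-k,\lambda;\cdot)$) are routine.
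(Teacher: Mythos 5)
Your base case is essentially the paper's argument: for $k=0$ the integral is (up to the constant $c_\lambda$) the weighted Bergman pairing $\left\langle e^{(\cdot|\bar x)},e^{(\cdot|x)}\right\rangle_\lambda$, and the Faraut--Kor\'anyi relation (\ref{inner1}) turns this into $\sum_{\mathbf{m}}(\lambda)_{\mathbf{m}}^{-1}K^{\mathbf{m}}(x,\bar x)=\sum_{\mathbf{m}}(\lambda)_{\mathbf{m}}^{-1}K^{\mathbf{m}}(x^2,e)=\mathcal{I}_\lambda(x^2)$. (Minor point: the Fischer norm of the $\mathbf{m}$-component of $e^{(x|\cdot)}$ is $K^{\mathbf{m}}(x,\bar x)=\frac{d_{\mathbf{m}}}{\left(\frac{n}{r}\right)_{\mathbf{m}}}\Phi_{\mathbf{m}}(x^2)$, not $\frac{d_{\mathbf{m}}}{|\mathbf{m}|!}\Phi_{\mathbf{m}}(x^2)/\Phi_{\mathbf{m}}(e)$; with your normalization the $k=0$ sum would not match the series defining $\mathcal{I}_\lambda$.)

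The genuine gap is the step you yourself flag as ``the main obstacle'': for $k>0$ you never produce the Rodrigues-type operator on $D$, never check that the integration by parts has no boundary contribution, and never prove the Chu--Vandermonde-type Pochhammer identity you offer as an alternative. This is precisely where the content of the theorem lies, since $k>0$ is what makes the formula available for all $\lambda$ with $\Re\lambda\le\frac{2n}{r}-1$ (in particular on the Wallach set). The paper closes this gap with two specific ingredients, neither of which needs an induction on $k$: first, the identity (\ref{inner2}) from Faraut--Kor\'anyi (Propositions XIV.2.2 and XIV.2.5 of \cite{FK}), which says that the analytically continued pairing $\langle f,g\rangle_\lambda$ equals $\frac{c_{\lambda+k}}{(\lambda)_k}\int_D\bigl(D^{(k)}(\lambda)f\bigr)\overline{g}\,h(w,w)^{\lambda+k-\frac{2n}{r}}dw$ with $D^{(k)}(\lambda)=\Delta(x)^{\frac{n}{r}-\lambda}\Delta\left(\frac{\partial}{\partial x}\right)^k\Delta(x)^{\lambda-\frac{n}{r}+k}$ --- this is the ``transport from $\Omega$ to $D$'' you gesture at but do not establish; and second, Proposition \ref{invdiffop}, proved by a Laplace-transform computation on the cone together with the involution $\mathbf{m}\mapsto k-\mathbf{m}^*$ and the Pochhammer identities (\ref{Poch}), which shows $D^{(k)}(\lambda)e^{(x|y)}=(\lambda)_k\,{}_1F_1(-k,\lambda;-x,y)e^{(x|y)}$. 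Applying (\ref{inner2}) with $f=e^{(\cdot|\bar x)}$, $g=e^{(\cdot|x)}$ and comparing with the (\ref{inner1}) computation gives the theorem in one stroke, with the singular $\lambda$ handled by the limit $\mu\to\lambda$ built into (\ref{inner2}). Without establishing these two facts (or a complete substitute for them), your argument does not go through for $k>0$.
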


Here $\mathcal{X}_l$ are the $L=\mathrm{Str}(V^\C)_0$-orbits. $\overline{\mathcal{X}_l}$ are also characterized 
as the supports of some distributions on $V^\C$ (see \cite{BSZ} and (\ref{suppofdistr})). 
$D\subset V^{\C}$ is the \textit{bounded symmetric domain} and $h(w,w)$ is the \textit{generic norm} on $V^{\C}$ 
(see Section \ref{Jordan}). For the explicit forms of $c_\lambda$ and ${}_1F_1(-k,\lambda;x,w)$ see 
Theorem \ref{integral}. Especially if $\Re\lambda>\frac{2n}{r}-1$ we can take $k=0$ and 
\[ \mathcal{I}_{\lambda}\left(x^2\right)
=\frac{1}{\pi^n}\frac{\Gamma_{\Omega}(\lambda)}{\Gamma_{\Omega}\left(\lambda-\frac{n}{r}\right)}
\int_De^{2(x|\Re w)}h(w,w)^{\lambda-\frac{2n}{r}}dw \]
and $\mathcal{J}_{\lambda}$ is similar. 

Now $D$ is naturally identified with 
$G/K=\mathrm{Bihol}(D)/\mathrm{Stab}(0)=\mathrm{Co}(V)_0/\mathrm{Aut}_{\mathrm{JTS}}(V)_0$. 
For $\lambda>\frac{2n}{r}-1$, the universal covering group $\tilde{G}$ acts unitarily on 
$\mathcal{O}(D)\cap L^2(D,h(w,w)^{\lambda-\frac{2n}{r}}dw)$ by left translation. 
This defines the holomorphic discrete series representation of $\tilde{G}$. 
This is analytically continued with respect to $\lambda\in\C$, and become unitary when $\lambda\in\mathcal{W}$, 
the \textit{(Berezin--)Wallach set} (see (\ref{Wallach}) and \cite{W}, \cite{B}). 
The trivial representation corresponds to $\lambda=0$. 

From now we set $V=\R$. Let $I_{\lambda}(x)$ be the classical I-Bessel function (see \cite[(4.12.2)]{AAR}), 
and we set $\tilde{I}_{\lambda}(x)=\left(\frac{x}{2}\right)^{-\lambda}I_{\lambda}(x)$. 
Then $\tilde{I}_{\lambda}$ and $\mathcal{I}_{\lambda}$ on $\R$ are related as
\[ \tilde{I}_{\lambda}(x)=\frac{1}{\Gamma(\lambda+1)}\mathcal{I}_{\lambda+1}\left(\frac{x^2}{4}\right). \]
Therefore the above theorem is rewritten as 
\[ \tilde{I}_{\lambda}(x)=\frac{\lambda+k}{\pi\Gamma(\lambda+1)}
\int_{|w|<1}{}_1F_1(-k,\lambda+1;-xw)e^{x\Re w}\left(1-|w|^2\right)^{\lambda+k-1}dw. \]
where ${}_1F_1(-k,\lambda+1;x)$ is the classical hypergeometric polynomial. 
This formula seems to be new even for $V=\R$ case. On the other hand, 
the formula (\ref{tubeint}) is rewritten as 
\[ \tilde{I}_{\lambda}(x)=\frac{1}{2i\pi\lambda}\int_{1+i\R}e^{w+\frac{x^2}{w}}w^{-\lambda-1}dw. \]
These two integral formulas are mutually independent, and cannot easily deduce one from another. 

Again let $V$ be a general Jordan algebra. Since $D$ is bounded, we can prove from this formula the following corollary. 

\begin{corollary}\label{upperboundintro}
For $\lambda\in\C$, $x\in\overline{\mathcal{X}_{\rank\lambda}}$, if $\Re\lambda+k>\frac{2n}{r}-1$ 
for some $k\in\Z_{\ge 0}$, then there exists a positive constant $C_{\lambda,k}>0$ such that 
\[ |\mathcal{I}_{\lambda}(x^2)|\le C_{\lambda,k}\left(1+|x|_1^{rk}\right)e^{2|\Re x|_1}, \qquad
|\mathcal{J}_{\lambda}(x^2)|\le C_{\lambda,k}\left(1+|x|_1^{rk}\right)e^{2|\Im x|_1} \]
where $|x|_1$ is the norm defined in Definition \ref{pnormdef}. 
\end{corollary}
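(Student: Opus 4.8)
The plan is to obtain both estimates directly from the integral formulas of Theorem \ref{integral} by bounding the integrand uniformly over the bounded domain $D$. Fix $k\in\Z_{\ge 0}$ with $\Re\lambda+k>\frac{2n}{r}-1$ and $x\in\overline{\mathcal{X}_{\rank\lambda}}$ as in the hypothesis. Starting from
\[ \mathcal{I}_{\lambda}(x^2)=c_{\lambda+k}\int_D{}_1F_1(-k,\lambda;-x,w)\,e^{2(x|\Re w)}\,h(w,w)^{\lambda+k-\frac{2n}{r}}\,dw, \]
I would pass to absolute values under the integral sign and treat the three factors separately.

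For the exponential factor, write $x=\Re x+i\Im x$ with $\Re x,\Im x\in V$; since $(\cdot|\cdot)$ is $\C$-bilinear and $\Re w\in V$ is real, $\Re\bigl((x|\Re w)\bigr)=(\Re x|\Re w)$, so $\bigl|e^{2(x|\Re w)}\bigr|=e^{2(\Re x|\Re w)}$. Combining the H\"older-type inequality $|(\xi|\eta)|\le|\xi|_1\,|\eta|_\infty$, where $|\cdot|_1$ is the norm of Definition \ref{pnormdef} and $|\cdot|_\infty$ is the spectral norm whose open unit ball is $D$, with the bound $|\Re w|_\infty\le|w|_\infty<1$ for $w\in D$, one gets $\bigl|e^{2(x|\Re w)}\bigr|\le e^{2|\Re x|_1}$ for every $w\in D$. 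For the factor $h(w,w)^{\lambda+k-\frac{2n}{r}}$, note that $h(w,w)$ is a positive real number on $D$, so its modulus is $h(w,w)^{\Re\lambda+k-\frac{2n}{r}}$, and $\Re\lambda+k>\frac{2n}{r}-1$ is precisely the integrability threshold for the weighted Bergman measure recalled in Section \ref{Jordan}, whence $\int_D h(w,w)^{\Re\lambda+k-\frac{2n}{r}}\,dw<\infty$. Finally ${}_1F_1(-k,\lambda;-x,w)$ is a polynomial of degree $rk$ in $x$ whose coefficients depend continuously, hence boundedly, on $w\in D$; by equivalence of norms on $V^\C$ this gives $\sup_{w\in D}\bigl|{}_1F_1(-k,\lambda;-x,w)\bigr|\le C_1\bigl(1+|x|_1^{rk}\bigr)$ with $C_1$ independent of $x$.

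Multiplying the three bounds and integrating yields
\[ |\mathcal{I}_{\lambda}(x^2)|\le|c_{\lambda+k}|\,C_1\Bigl(\int_D h(w,w)^{\Re\lambda+k-\frac{2n}{r}}\,dw\Bigr)\bigl(1+|x|_1^{rk}\bigr)e^{2|\Re x|_1}, \]
which is the first claimed estimate with $C_{\lambda,k}$ the product of the three finite constants in front (the constant $c_{\lambda+k}$ is finite because $\Re(\lambda+k)$ lies beyond all poles of the $\Gamma_{\Omega}$-factors appearing in its definition). The estimate for $\mathcal{J}_{\lambda}$ is obtained in the same way from the second formula of Theorem \ref{integral}, now using $\bigl|e^{2i(x|\Re w)}\bigr|=e^{-2(\Im x|\Re w)}\le e^{2|\Im x|_1}$ together with the same polynomial bound applied to ${}_1F_1(-k,\lambda;-ix,w)$.

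The step needing the most care --- though it remains elementary --- is controlling the exponential factor with the sharp constant $2$: this uses the duality $|(\xi|\eta)|\le|\xi|_1\,|\eta|_\infty$ and the fact that $D$ is the open $|\cdot|_\infty$-unit ball, so that $\Re w$ lies in that ball as well. Everything else is just estimating an absolutely convergent integral over a bounded set, and no interchange of limits or analytic continuation in $\lambda$ is needed since $k$ is held fixed.
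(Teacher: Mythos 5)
Your proposal is correct and follows essentially the same route as the paper's own proof: take absolute values inside the integral of Theorem \ref{integral}, bound the exponential via $|(\Re x|\Re w)|\le|\Re x|_1|\Re w|_\infty\le|\Re x|_1$ (Proposition \ref{pnorm}), bound the polynomial ${}_1F_1$ factor by $C(1+|x|_1^{rk})$ uniformly in $w\in\overline{D}$, and use the finiteness of $\int_D h(w,w)^{\Re\lambda+k-\frac{2n}{r}}dw$ under the hypothesis $\Re\lambda+k>\frac{2n}{r}-1$. The only cosmetic difference is that the paper writes $|\Re w|_\infty\le\frac{|w|_\infty+|\bar w|_\infty}{2}\le 1$ where you write $|\Re w|_\infty\le|w|_\infty<1$, which amounts to the same triangle-inequality argument.
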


In \cite[Lemma 3.1]{M} an upper estimate of $\mathcal{J}_{\lambda}(x)$ is given by another method, 
but our estimate is sharper. For detail see Remark \ref{compare}. 
When $V=\R$, this corollary implies that if $\Re \lambda>-k$ for some $k\in\Z_{\ge 0}$, 
\[ |\tilde{I}_{\lambda}(x)|
=\frac{1}{|\Gamma(\lambda+1)|}\left|\mathcal{I}_{\lambda+1}\left(\frac{x^2}{4}\right)\right|
\le C_{\lambda,k}'\left(1+|x|^{k}\right)e^{|\Re x|}. \]
On the other hand, we have the asymptotic expansion 
\[ \tilde{I}_{\lambda}(x)\sim\frac{\left(\frac{x}{2}\right)^{-\lambda}}{\sqrt{2\pi x}}
\left(e^{x}\sum_{m=0}^\infty\frac{(-1)^m(\lambda,m)}{(2x)^m}
+e^{-x+\left(\lambda+\frac{1}{2}\right)\pi i}\sum_{m=0}^\infty\frac{(\lambda,m)}{(2x)^m}\right) \]
where $(\lambda,m)$ are some numbers (see \cite[(4.12.7)]{AAR}), and this implies that 
\[ |\tilde{I}_{\lambda}(x)|
\le C_{\lambda}''\left(1+|x|^{\max\left\{-\lambda-\frac{1}{2},0\right\}}\right)e^{|\Re x|}. \]
Therefore our result is not the sharpest when $\Re\lambda\le 0$, but it still seems to be 
sufficiently sharp. 

This paper is organized as follows: In Section 2, we recall some notations and facts about Euclidean Jordan algebras. 
In Section 3 we prove our main theorem, the integral formula and upper estimates. In Section 4, as an application 
of the inequality (Corollary \ref{upperboundintro}), we give an upper estimate of the integral kernel function 
of the 1-dimensional semigroup on the functions on the symmetric cones.

\section{Preliminaries}
\subsection{Simple Euclidean Jordan algebras}\label{Jordan}
Let $V$ be a simple Euclidean Jordan algebra of dimension $n$, rank $r$. We denote the unit element by $e$. 
Also let $V^{\C}$ be its complexification. For $x,y,z\in V^{\C}$, we write 
\begin{align*}
L(x)y&:=xy,\\ x\Box y&:=L(xy)+[L(x),L(y)],\\
P(x,z)&:=L(x)L(z)+L(z)L(x)-L(xz),\\ P(x)&:=P(x,x)=2L(x)^2-L(x^2),\\
B(x,y)&:=I_{V^{\C}}-2x\Box\bar{y}+P(x)P(\bar{y})
\end{align*}
where $y\mapsto\bar{y}$ is the complex conjugation with respect to the real form $V$. 
Also, we write 
\[ \{ x,y,z\}:=(x\Box \bar{y})z=P(x,z)\bar{y}=(x\bar{y})z+x(\bar{y}z)-(xz)\bar{y}. \]
Then $V^{\C}$ becomes a positive Hermitian Jordan triple system with this triple product. 

We denote the \textit{Jordan trace} and the \textit{Jordan determinant} of the complex Jordan algebra $V^{\C}$ 
by $\tr(x)$ and $\Delta(x)$ respectively. 
Also let $h(x,y)$ be the \textit{generic norm} of the Jordan triple system $V^{\C}$. 
These can be expressed by $L(x)$, $P(x)$, and $B(x,y)$ (see \cite[Proposition III.4.2]{FK}, 
\cite[Part V, Proposition VI.3.6]{FKKLR}): 
\begin{gather*}
\Tr L(x)=\frac{n}{r}\tr(x),\\ \Det P(x)=\Delta(x)^{\frac{2n}{r}},\\ \Det B(x,y)=h(x,y)^{\frac{2n}{r}} 
\end{gather*}
where $\Tr$ and $\Det$ stand for the usual trace and determinant of complex linear operators on $V^{\C}$. 
Using the Jordan trace we define the inner product on $V^{\C}$:
\[ (x|y):=\tr(x\bar{y}),\qquad x,y\in V^{\C}. \]
Then this is positive definite since $V$ is Euclidean. Also we define the \textit{symmetric cone} $\Omega$ and the 
\textit{bounded symmetric domain} $D$ by
\begin{gather*}
\Omega:=\{x^2: x\in V,\ \Delta(x)\ne 0\}, \\
D:=(\mathrm{connected\ component\ of}\ \{w\in V^\C: h(w,w)>0\}\ \mathrm{which\ contains\ 0}). 
\end{gather*}
Then $\Omega$ is self-dual, $i.e.$, 
\[ \Omega=\{x\in V: (x|y)>0\; \text{for any}\; y\in\Omega\}, \]
and $D$ is biholomorphically equivalent to $V+\sqrt{-1}\Omega\subset V^\C$. 
  
Let $K_L$ and $K$ be the identity components of \textit{automorphism groups} of the Jordan algebra $V$ and 
the Jordan triple system $V^{\C}$. Similarly let $L$ and $L^{\C}$ be the identity components of 
\textit{structure groups} of $V$ and $V^{\C}$. Also let $G$ be the identity component of 
\textit{conformal group} of $V$: 
\begin{gather*}
K_L:=\mathrm{Aut}_{\mathrm{J.Alg}}(V)_0=\{ k\in GL(V): k(xy)=kx\cdot ky, \; \forall x,y\in V\}_0,\\
K:=\mathrm{Aut}_{\mathrm{JTS}}(V^{\C})_0=\{ k\in GL(V^{\C}): k\{ x,y,z\}=\{ kx,ky,kz\}, \; \forall x,y,z\in V^{\C}\}_0,\\
L:=\mathrm{Str}(V)_0=\{ l\in GL(V): l\{ x,y,z\}=\{ lx,{}^tl^{-1}y,lz\}, \; \forall x,y,z\in V\}_0,\\
L^{\C}:=\mathrm{Str}(V^{\C})_0=\{ l\in GL(V^{\C}): l\{ x,y,z\}=\{ lx,(l^*)^{-1}y,lz\}, \; \forall x,y,z\in V^{\C}\}_0,\\
G:=\mathrm{Co}(V)_0=\mathrm{Bihol}(D)_0\simeq \mathrm{Bihol}(V+\sqrt{-1}\Omega)_0
\end{gather*}
where ${}^tl$ and $l^*$ stand for the transpose with respect to the bilinear form $\tr(xy)$ and 
the sesquilinear form $\tr(x\bar{y})=(x|y)$. 
Then $\Omega$ and $D$ are naturally identified with $L/K_L$ and $G/K$ respectively. 
For the classification of these groups see \cite[Table 1]{HKMO} or \cite[Table 1]{M}.

\subsection{Spectral decomposition and some norms on $V^{\C}$}
From now on we fix a \textit{Jordan frame} $\{c_1,\ldots,c_r\}\subset V$, $i.e.$, 
\begin{gather*}
c_jc_k=\delta_{jk}c_j,\qquad \sum_{j=1}^rc_j=e, \\ 
\text{and if}\; d_{j1},d_{j2}\in V \; \text{satisfy}\; c_j=d_{j1}+d_{j2},\ d_{jk}d_{jl}=\delta_{kl}d_{jk},\; 
\text{then}\; d_{j1}=0\; \text{or}\; d_{j2}=0. 
\end{gather*}
Then for any $x\in V^{\C}$ there exist the unique numbers $t_1\ge\cdots t_r\ge 0$ and the element $k\in K$ such that 
$x=k\sum_{j=1}^rt_jc_j$ (\cite[Proposition X.3.2]{FK}). Using this, we define the $p$-norm on $V^{\C}$. 

\begin{Definition}\label{pnormdef}
For $1\le p\le \infty$ and for $x=k\sum_{j=1}^rt_jc_j\in V^{\C}$, we define
\[ |x|_p:=\begin{cases} \displaystyle \left(\sum_{j=1}^r|t_j|^p\right)^{\frac{1}{p}}&(1\le p<\infty), \\
\displaystyle \max_{j\in\{1,\ldots,r\}}|t_j|&(p=\infty). \end{cases} \]
\end{Definition}

For example, we have $(x|x)=|x|_2^2$. Also if $x\in\Omega$ then all eigenvalues 
(in the sense of Jordan algebras. For $V=\mathrm{Sym}(r,\R)$ or $\mathrm{Herm}(r,\C)$ this coincides with the 
usual one)
are positive and $|x|_1=\tr x$ holds. In addition, we can define $D$ by $D=\{ w\in V^\C: |w|_\infty<1\}$. 
This norm satisfies the following properties. 

\begin{proposition}[{\cite[Theorem V.4, V.5]{S}} for $V=\mathrm{Herm}(r,\C)$ case]\label{pnorm}
Let $1\le p,q\le \infty$ and $\frac{1}{p}+\frac{1}{q}=1$. Then the following statements hold. 
\begin{enumerate}
\item For $x,y\in V^{\C}$, $|(x|y)|\le |x|_p|y|_q$. 
\item For $x\in V^{\C}$, $\displaystyle |x|_p=\max_{y\in V^{\C}\setminus\{0\}}\frac{|(x|y)|}{|y|_q}$. 
\item $x\mapsto |x|_p$ is a norm on $V^{\C}$. 
\end{enumerate}
\end{proposition}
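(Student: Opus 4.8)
The plan is to reduce all three assertions to a single master inequality, the Jordan-theoretic analogue of von Neumann's trace inequality: for all $x,y\in V^{\C}$ with spectral decompositions $x=k\sum_{j}t_{j}(x)c_{j}$ and $y=k'\sum_{j}t_{j}(y)c_{j}$ (here $k,k'\in K$ and $t_{1}(\cdot)\ge\dots\ge t_{r}(\cdot)\ge 0$), one has $|(x|y)|\le\sum_{j=1}^{r}t_{j}(x)\,t_{j}(y)$. Granting this, part (1) follows at once from the classical H\"older inequality on $\R^{r}$, since $\sum_{j}t_{j}(x)t_{j}(y)\le\bigl(\sum_{j}t_{j}(x)^{p}\bigr)^{1/p}\bigl(\sum_{j}t_{j}(y)^{q}\bigr)^{1/q}=|x|_{p}|y|_{q}$. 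For part (2), the bound $|x|_{p}\ge|(x|y)|/|y|_{q}$ is merely a rearrangement of (1), while the reverse inequality comes from an explicit near-extremal vector: writing $x=k\sum_{j}t_{j}c_{j}$, take $y=k\sum_{j}t_{j}^{\,p-1}c_{j}$ when $1<p<\infty$, and $y=ke$, respectively $y=kc_{1}$, in the endpoint cases. Using that $K$ preserves $(\cdot|\cdot)$ (equivalently $k^{*}=k^{-1}$ for $k\in K$; this follows from $\operatorname{Tr}L(z)=\tfrac{n}{r}\tr z$ together with $K$ being a group of JTS automorphisms) and that $(c_{i}|c_{j})=\delta_{ij}$, one computes $(x|y)=\sum_{j}t_{j}^{\,p}$ and $|y|_{q}=\bigl(\sum_{j}t_{j}^{\,p}\bigr)^{1/q}$, whence $|(x|y)|/|y|_{q}=\bigl(\sum_{j}t_{j}^{\,p}\bigr)^{1/p}=|x|_{p}$. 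Part (3) is then purely formal: $|x|_{p}=0$ forces all $t_{j}(x)=0$, i.e.\ $x=0$; positive homogeneity is clear from the definition; and subadditivity follows from (2), since $|x_{1}+x_{2}|_{p}=\sup_{y\ne 0}|(x_{1}+x_{2}|y)|/|y|_{q}\le\sup_{y}|(x_{1}|y)|/|y|_{q}+\sup_{y}|(x_{2}|y)|/|y|_{q}=|x_{1}|_{p}+|x_{2}|_{p}$.

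It remains to establish the master inequality, and for this I would first reduce to the case $y=\sum_{j}t_{j}(y)c_{j}$ (replacing $y$ by $(k')^{-1}y$, which changes neither $|(x|y)|$ nor the spectral values, as $K$ acts by isometries). Then $(x|y)=\sum_{j}t_{j}(y)\overline{(c_{j}|x)}$, so $|(x|y)|\le\sum_{j}t_{j}(y)\,|(c_{j}|x)|$, and — via Abel summation together with the rearrangement inequality, using $t_{1}(y)\ge\dots\ge t_{r}(y)\ge 0$ — the bound $\sum_{j}t_{j}(y)|(c_{j}|x)|\le\sum_{j}t_{j}(y)t_{j}(x)$ follows from the weak-majorization estimate: for every $m\in\{1,\dots,r\}$, the sum of the $m$ largest among $|(c_{1}|x)|,\dots,|(c_{r}|x)|$ is $\le t_{1}(x)+\dots+t_{m}(x)$. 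Now if $S$ indexes the $m$ largest terms and we write $|(c_{j}|x)|=(\varepsilon_{j}c_{j}|x)$ with $|\varepsilon_{j}|=1$, the left side equals $(w|x)$ for $w:=\sum_{j\in S}\varepsilon_{j}c_{j}$, which is a tripotent of rank $m$ (a sum of $m$ mutually orthogonal primitive tripotents $\varepsilon_{j}c_{j}$). Hence the estimate is a consequence of the Ky Fan maximum principle for $V^{\C}$: the sum of the $m$ largest spectral values of $x$ equals $\max\{\Re(u|x):u\text{ a rank-}m\text{ tripotent of }V^{\C}\}$.

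I expect this last point — the Ky Fan / Schur--Horn majorization for Hermitian Jordan triple systems — to be the only real obstacle. For $V=\mathrm{Herm}(r,\C)$, where $V^{\C}\cong M(r,\C)$ and the spectral values are singular values, it is exactly Ky Fan's classical theorem — the case treated in \cite{S}. For a general simple $V$ I would prove it directly from the Peirce decomposition relative to a Jordan frame — reducing, as above, to $x=\sum_{j}t_{j}c_{j}$ and showing that $\sum_{j}t_{j}\Re(c_{j}|u)$ is maximized, over rank-$m$ tripotents $u$, by $u=c_{1}+\dots+c_{m}$ — or else cite the corresponding convexity theorem for Jordan algebras. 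Two tempting shortcuts should be avoided. First, one might try to realize the Peirce-diagonal projection of $V^{\C}$ onto $\bigoplus_{j}\C c_{j}$ as an average over a maximal torus of $K$ and deduce the majorization from the triangle inequality for $|\cdot|_{q}$; this is circular, as that triangle inequality is precisely assertion (3). Second, one cannot reach the intermediate exponents $1<p<\infty$ by interpolating the two easy endpoints $p=1$ (the trace norm) and $p=\infty$ (the operator norm), because $\ell^{1}$- and $\ell^{\infty}$-domination of one nonnegative vector by another does not imply $\ell^{q}$-domination. (The single case $p=q=2$ is, of course, immediate from $(x|x)=|x|_{2}^{2}$ and Cauchy--Schwarz.)
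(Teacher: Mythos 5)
Your overall architecture agrees with the paper's: part (1) is the substantive point, part (2) follows from (1) plus the explicit extremizer $y=k\sum_j t_j^{p-1}c_j$ (resp.\ $kc_1$), and part (3) follows formally from the duality in (2) --- indeed these are exactly the steps the paper takes for (2) and (3). The problem is part (1). You reduce the von Neumann-type master inequality $|(x|y)|\le\sum_j t_j(x)t_j(y)$ to a Ky Fan maximum principle for Hermitian Jordan triple systems (the sum of the $m$ largest spectral values of $x$ equals $\max\Re(u|x)$ over rank-$m$ tripotents $u$), and then you neither prove that principle nor give a citable reference for it in the generality needed (all five simple types, not just $\mathrm{Herm}(r,\C)$). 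This is a genuine gap, and not a small one: your own sketch of how to prove it --- reduce to $x=\sum_j t_jc_j$ and show $\sum_j t_j\Re(c_j|u)$ is maximized at $u=c_1+\cdots+c_m$ --- requires controlling $\Re(c_j|u)$ for an arbitrary tripotent $u$ against a frame adapted to $x$, which is exactly where the whole difficulty of (1) is concentrated; the obvious bounds one would reach for ($|(c|u)|\le|c|_1|u|_\infty$, etc.) are instances of the statement being proven. So as written the argument defers, rather than closes, the essential step.

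The paper closes it with a short variational argument that you could adopt almost verbatim: since $e^{i\theta}I_{V^{\C}}\in K$, one has $|(x|y)|\le\max_{k\in K}\Re(kx|y)$; at a maximizer $x_0=k_0x$ the derivative along every $D=u\Box\bar{v}-v\Box\bar{u}\in\mathfrak{k}$ vanishes, which after a few manipulations with the associativity of $(\cdot|\cdot)$ forces $x_0\Box\bar{y}=y\Box\bar{x_0}$; then \cite[Part V, Proposition VI.2.1]{FKKLR} (Lemma \ref{flat} in the paper) places $x_0$ and $y$ in a common flat $\R\mathchar`-\operatorname{span}\{kc_1,\ldots,kc_r\}$, where the claim collapses to the classical H\"older (plus rearrangement) inequality on $\R^r$. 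The same compactness-plus-critical-point device would also prove your Ky Fan principle, since the rank-$m$ tripotents form a single $K$-orbit; but one of the two arguments has to actually be carried out. I would also note that your appeal to "the rearrangement inequality" in the Abel-summation step silently uses that the spectral values $t_j(x)$ dominate the decreasing rearrangement of $(|(c_j|x)|)_j$ in the weak-majorization order --- that is fine once the Ky Fan principle is available, but it is part of the same missing block. Your closing cautions about circular shortcuts (averaging over a torus, or interpolating $p=1$ and $p=\infty$) are well taken and correct.
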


To prove this, we quote the following lemma (see \cite[Part V, Proposition VI.2.1]{FKKLR}): 

\begin{lemma}\label{flat}
For $x,y\in V^{\C}$, if $x\Box\bar{y}=y\Box\bar{x}$, then there exists an element $k\in K$ 
such that both $x$ and $y$ belong to $\R\mathchar`-\operatorname{span}\{kc_1,\ldots,kc_r\}$. 
\end{lemma}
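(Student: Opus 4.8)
The plan is to recast the hypothesis as a self-adjointness condition and then run a simultaneous-diagonalization argument built on the spectral decomposition of $x$ and the Peirce calculus of the Jordan frame. First I would observe that, since the inner product $(x|y)=\tr(x\bar y)$ makes $V^{\C}$ a positive Hermitian Jordan triple system, the box operators satisfy $(x\Box\bar y)^*=y\Box\bar x$ with respect to $(\cdot|\cdot)$, equivalently $(\{x,y,u\}|v)=(u|\{y,x,v\})$ for all $u,v$. Hence the hypothesis $x\Box\bar y=y\Box\bar x$ is equivalent to saying that the operator $x\Box\bar y$ is self-adjoint. I would also note that the hypothesis is $K$-equivariant: for $k\in K$ the automorphism identity $k\{x,y,z\}=\{kx,ky,kz\}$ gives $(kx)\Box\overline{(ky)}=k(x\Box\bar y)k^{-1}$, so conjugating by $k$ shows that replacing $(x,y)$ by $(kx,ky)$ preserves both the hypothesis and the (also $K$-equivariant) conclusion. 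Using the spectral decomposition \cite[Proposition X.3.2]{FK}, write $x=k_0\sum_{j=1}^r t_jc_j$ with $t_1\ge\cdots\ge t_r\ge0$; applying $k_0^{-1}$ I may therefore assume $x=\sum_{j=1}^r t_jc_j$ already lies in the real span of the fixed frame, and it remains to produce a single $k\in K$ diagonalizing $y$ as well while keeping $x$ diagonal.

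Next I would group the frame according to the distinct values $s_1>\cdots>s_m\ge0$ occurring among the $t_j$, setting $f_\alpha=\sum_{j:\,t_j=s_\alpha}c_j$, so that the $f_\alpha$ are mutually orthogonal tripotents and $x=\sum_\alpha s_\alpha f_\alpha$. The key step is to show that $y$ is block-diagonal for the joint Peirce decomposition of $\{f_\alpha\}$, i.e.\ that $y\in\bigoplus_\alpha V_2(f_\alpha)$, the off-diagonal joint Peirce components $y_{\alpha\beta}\in V_1(f_\alpha)\cap V_1(f_\beta)$ with $\alpha\ne\beta$ and any component in the kernel tripotent $e-\sum_\alpha f_\alpha$ all vanishing. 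Expanding $x\Box\bar y-y\Box\bar x$ in this decomposition and using the Peirce multiplication rules for a frame, the $V_1(f_\alpha)\cap V_1(f_\beta)$-component should come out proportional to $(s_\alpha^2-s_\beta^2)\,y_{\alpha\beta}$; since $s_\alpha\ne s_\beta\ge0$ forces $s_\alpha^2\ne s_\beta^2$, the hypothesis then gives $y_{\alpha\beta}=0$. As a sanity check, for $V^{\C}=M(r,\C)$ with $\{x,y,z\}=\tfrac12(xy^*z+zy^*x)$ the hypothesis reduces to $xy^*=yx^*$ and $x^*y=y^*x$, and with $x=\operatorname{diag}(t_j)$ these read $t_i\bar y_{ji}=t_jy_{ij}$ and $t_iy_{ij}=t_j\bar y_{ji}$, whence $(t_i^2-t_j^2)y_{ij}=0$, exactly the announced factor. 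This Peirce computation, including the correct treatment of the joint $V_1$-spaces and of the kernel tripotent when some $t_j=0$, is where I expect the real work to lie; it is the main obstacle.

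Granting block-diagonality, each summand $V_2(f_\alpha)$ is itself a positive Hermitian Jordan triple system of rank $m_\alpha=\#\{j:t_j=s_\alpha\}$, on which $x$ acts as the scalar $s_\alpha$ times the unit $f_\alpha$. I would then apply the spectral decomposition within each block to the component $y_\alpha$ of $y$, obtaining an orthogonal frame $\{c_j'\}_{j:\,t_j=s_\alpha}$ of $V_2(f_\alpha)$ refining $f_\alpha$ together with an automorphism $k_\alpha$ of $V_2(f_\alpha)$, extended to $V^{\C}$ by the identity on the other blocks, carrying $y_\alpha$ into $\R\text{-span}\{c_j'\}$. Because $x$ is a scalar multiple of $f_\alpha=\sum_{j:\,t_j=s_\alpha}c_j'$ on each block, it remains in the real span of the refined frame. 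Assembling the $k_\alpha$ into a single $k'\in K$ and the refined tripotents into a full frame, and composing with the initial $k_0$, I obtain $k:=k_0k'\in K$ with both $x$ and $y$ in $\R\text{-span}\{kc_1,\dots,kc_r\}$, as required. This recovers \cite[Part V, Proposition VI.2.1]{FKKLR}.
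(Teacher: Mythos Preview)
The paper does not give its own proof of this lemma: it is quoted from \cite[Part V, Proposition VI.2.1]{FKKLR} and used as a black box in the proof of Proposition~\ref{pnorm}. There is therefore nothing in the paper itself to compare your argument against beyond the bare citation.

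Your proposal goes further by sketching the argument behind that citation, and the strategy you describe---normalize $x$ by an element of $K$ to lie in the real span of the fixed frame, use the hypothesis to force $y$ to be block-diagonal with respect to the eigenspace tripotents $f_\alpha$ of $x$, then diagonalize each block $y_\alpha$ inside $V_2(f_\alpha)$ by an automorphism that leaves $x|_{V_2(f_\alpha)}=s_\alpha f_\alpha$ untouched---is indeed the proof given in \cite{FKKLR}. You correctly flag the Peirce computation producing the factor $s_\alpha^2-s_\beta^2$ as the substantive step, and your $M(r,\C)$ sanity check is accurate. Two minor clean-ups: since the $f_\alpha$ arise by partitioning a full Jordan frame with $\sum_jc_j=e$, one has $\sum_\alpha f_\alpha=e$ and there is no separate ``kernel tripotent $e-\sum_\alpha f_\alpha$'' to treat (the case $s_m=0$, if it occurs, is simply the last block); and the final assembly step tacitly uses that $K$ acts transitively on Jordan frames of $V^{\C}$, so that the refined frame $\{c_j'\}$ you produce can be rewritten as $\{kc_j\}$ for some $k\in K$.
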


\begin{proof}[Proof of Proposition \ref{pnorm}]
(1) We note that $\displaystyle |(x|y)|\le\max_{k\in K}|(kx|y)|=\max_{k\in K}\Re(kx|y)$ 
since $e^{i\theta}I_{V^{\C}}\in K$ for any $\theta\in\R$. 
We take $k_0\in K$ such that $\Re(kx|y)$ $(k\in K)$ attains its maximum at $k=k_0\in K$. We put $k_0x=:x_0$. 
Then for any $D\in\mathfrak{k}=Lie(K)$, 
\[ \left.\frac{d}{dt}\right|_{t=0}\Re(e^{tD}x_0|y)=\Re(Dx_0|y)=0. \]
In the case when $D=u\Box \bar{v}-v\Box \bar{u}$ with $u,v\in V^{\C}$, 
\begin{align*}
0=&\Re((u\Box \bar{v})x_0|y)-\Re((v\Box \bar{u})x_0|y)=\Re((x_0\Box \bar{v})u|y)-\Re((x_0\Box \bar{u})v|y)\\
=&\Re(u|(v\Box \bar{x_0})y)-\Re(v|(u\Box \bar{x_0})y)=\Re(u|(y\Box \bar{x_0})v)-\Re(v|(y\Box \bar{x_0})u)\\
=&\Re((x_0\Box \bar{y})u|v)-\Re(v|(y\Box \bar{x_0})u)=\Re((x_0\Box \bar{y}-y\Box\bar{x_0})u|v). 
\end{align*}
Since $u,v\in V^{\C}$ are arbitrary and $(\cdot|\cdot)$ is non-degenerate, $x_0\Box \bar{y}=y\Box\bar{x_0}$. 
Therefore by Lemma \ref{flat} there exists $k\in K$ such that 
$x_0,y\in\R\mathchar`-\operatorname{span}\{kc_1,\ldots,kc_r\}$. 
Let $x=k'\sum_{j=1}^rt_jc_j$, $y=k\sum_{j=1}^rs_jc_j$. Then
\begin{align*}
|(x|y)|\le&\max_{k\in K}\Re(kx|y)=\Re(x_0|y)
=\Re\left(\left.k\sum_{j=1}^rt_jc_j\right|k\sum_{j=1}^rs_jc_j\right)\\
=&\sum_{j=1}^rt_js_j
\le\left(\sum_{j=1}^r|t_j|^p\right)^{\frac{1}{p}}\left(\sum_{j=1}^r|s_j|^q\right)^{\frac{1}{q}}=|x|_p|y|_q. 
\end{align*}
(2) ($\ge$) Clear from (1). \\
($\le$) For $x=k\sum_{j=1}^rt_jc_j\in V^{\C}$ $(t_1\ge\cdots t_r\ge 0)$, 
we find a $y\in V^{\C}$ which attains the equality. We set
\[ y:=\begin{cases} k\sum_{j=1}^rt_j^{p-1}c_j&(1\le p<\infty),\\ kc_1&(p=\infty).\end{cases} \]
Then,
\[ |y|_q=\begin{cases}\left(\sum_{j=1}^rt_j^{(p-1)q}\right)^{\frac{1}{q}}
=\left(\sum_{j=1}^rt_j^{p}\right)^{\frac{p-1}{p}}=|x|_p^{p-1}&(1<p<\infty),\\
1&(p=1,\infty),\end{cases} \]
and
\[ (x|y)=\begin{cases}\sum_{j=1}^rt_j^p=|x|_p^p=|x|_p|x|_p^{p-1}=|x|_p|y|_q&(1\le p<\infty),\\
t_1=|x|_{\infty}=|x|_{\infty}|y|_1&(p=\infty).\end{cases} \]
(3) Positivity and homogeneity are clear. For triangle inequality, by (2), for $x,y\in V^{\C}$, 
\[ |x+y|_p=\max_{|z|_q=1}|(x+y|z)|\le\max_{|z|_q=1}|(x|z)|+\max_{|z|_q=1}|(y|z)|=|x|_p+|y|_p \]
and this completes the proof. 
\end{proof}

We set 
\begin{equation}\label{subvar}
\mathcal{X}_l:=\left\{k\sum_{j=1}^lt_jc_j:k\in K,\ t_j>0\right\}=L^{\C}\cdot\sum_{j=1}^le_j\subset V^{\C}\quad 
(l=0,\ldots,r). 
\end{equation}
Then $\overline{\mathcal{X}_l}=\mathcal{X}_0\cup\mathcal{X}_1\cup\ldots\cup\mathcal{X}_l$ holds. 
$\overline{\mathcal{X}_l}$ are also characterized as the supports of the distributions which are the 
analytic continuation of $|\Delta(x)|^{2\left(\lambda-\frac{n}{r}\right)}dx$: 
\begin{equation}\label{suppofdistr}
\operatorname{supp} \left(\left.|\Delta(x)|^{2\left(\lambda-\frac{n}{r}\right)}dx\right|_{\lambda=l\frac{d}{2}}\right)
=\overline{\mathcal{X}_l}, \qquad l=0,1,\ldots,r-1
\end{equation}
(see \cite[Proposition 5.5]{BSZ}).

\subsection{Peirce decomposition and generalized power function}
As before we fix a Jordan frame $\{c_1,\ldots,c_r\}\subset V$. Then $V$ is decomposed as
\[ V=\bigoplus_{1\le j\le k\le r}V_{jk}\quad \text{where} \quad 
V_{jk}=\left\{ x\in V:L(c_l)x=\frac{\delta_{jl}+\delta_{kl}}{2}x\right\}. \]
Moreover $V_{jj}=\R c_j$ holds, and all $V_{jk}$'s ($j\ne k$) have the same dimension 
(see \cite[Theorem IV.2.1, Corollary IV.2.6]{FK}). 
We write $\dim V_{jk}=d$. 
Then $\dim V=n=r+\frac{1}{2}r(r-1)d$ holds. 

Let $V_{(l)}^{\C}:=\bigoplus_{1\le j\le k\le l}V_{jk}^{\C}$ $(l=1,\ldots,r)$ and $P_{(l)}$ be the orthogonal projection 
on $V_{(l)}^{\C}$. We denote by $\det_{(l)}(x)$ the Jordan determinant on the Jordan algebra $V_{(l)}^{\C}$. 
We set $\Delta_l(x):=\det_{(l)}(P_{(l)}(x))$ for $x\in V^{\C}$. For $\mathbf{s}=(s_1,\ldots,s_r)\in\C^r$, 
the \textit{generalized power function} on $V^{\C}$ is defined by
\[ \Delta_{\mathbf{s}}(x):=\Delta_1^{s_1-s_2}(x)\Delta_2^{s_2-s_3}(x)\cdots\Delta_{r-1}^{s_{r-1}-s_r}(x)
\Delta_r^{s_r}(x). \]
Then, the \textit{Gindikin Gamma function} and \textit{Pochhammer symbol} are defined as follows: 
for $\mathbf{s}\in \C^r$ and $\mathbf{m}\in (\Z_{\ge 0})^r$, 
\begin{equation}\label{Gamma}
\Gamma_{\Omega}(\mathbf{s}):=\int_{\Omega}e^{-\tr(x)}\Delta_{\mathbf{s}}(x)\Delta(x)^{-\frac{n}{r}}dx,\qquad
(\mathbf{s})_{\mathbf{m}}:=\frac{\Gamma_{\Omega}(\mathbf{s}+\mathbf{m})}{\Gamma_{\Omega}(\mathbf{s})}. 
\end{equation}
This integral converges for $\Re s_j>(j-1)\frac{d}{2}$, and both functions are extended meromorphically on $\C^r$ 
(see \cite[Theorem VII.1.1]{FK} or \cite[Theorem 2.1]{G}). Moreover, we have
\[ (\mathbf{s})_{\mathbf{m}}=\prod_{j=1}^r\left(s_j-(j-1)\frac{d}{2}\right)_{m_j}\quad
\text{where}\quad (s)_m=s(s+1)\cdots(s+m-1). \]
For $\mathbf{s}=(s_1,\ldots,s_r)\in\C^r$, we set $\mathbf{s}^*=(s_r,\ldots,s_1)$. Then we can prove easily 
\begin{equation}\label{Poch}
(\mathbf{s})_{\mathbf{m}+\mathbf{n}}=(\mathbf{s})_{\mathbf{m}}(\mathbf{s}+\mathbf{m})_{\mathbf{n}},\qquad
(-\mathbf{s}^*)_{\mathbf{m}}=(-1)^{|\mathbf{m}|}\left(\mathbf{s}-\mathbf{m}^*+\frac{n}{r}\right)_{\mathbf{m}^*}
\end{equation}
where $|\mathbf{m}|=m_1+\cdots+m_r$. Here we identify $\lambda\in\C$ and $(\lambda,\ldots,\lambda)\in\C^r$.

\subsection{Polynomials on $V^{\C}$}
We set $\Z_{++}^r:=\{\mathbf{m}=(m_1,\ldots,m_r)\in (\Z_{\ge 0})^r:m_1\ge m_2\ge\cdots m_r\ge 0\}$, and 
denote the space of holomorphic polynomials on $V^{\C}$ by $\mathcal{P}(V^{\C})$. 
For $\mathbf{m}\in\Z_{++}^r$, we define 
$\mathcal{P}_{\mathbf{m}}(V^{\C}):=\C\mathchar`-\operatorname{span}\{\Delta_{\mathbf{m}}\circ l:l\in L^{\C}\}$. 
Then clearly $\mathcal{P}_{\mathbf{m}}(V^{\C})$ becomes a $L^{\C}$-module. Moreover, we have 

\begin{theorem}[Hua--Kostant--Schmid, see {\cite[Theorem XI.2.4]{FK}}]\label{HKS}
\[ \mathcal{P}(V^{\C})=\bigoplus_{\mathbf{m}\in\Z_{++}^r}\mathcal{P}_{\mathbf{m}}(V^{\C}). \]
These $\mathcal{P}_{\mathbf{m}}(V^{\C})$'s are mutually inequivalent, and irreducible as $L^{\C}$-modules. 
\end{theorem}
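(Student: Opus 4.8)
\textbf{Proof proposal for Theorem \ref{HKS}.} The plan is to realize $\mathcal{P}(V^{\C})$ as a module for the structure group and exploit the Jordan-theoretic symmetry already encoded in the generalized power functions $\Delta_{\mathbf{m}}$. First I would observe that $L^{\C}$ acts on $\mathcal{P}(V^{\C})$ preserving the grading by total degree, so it suffices to decompose each homogeneous piece $\mathcal{P}_k(V^{\C})$ into irreducibles; since $\mathcal{P}_k(V^{\C})$ is finite-dimensional and $L^{\C}$ is reductive (it is the structure group of a simple Euclidean Jordan algebra), complete reducibility is automatic. The key input is that for each $\mathbf{m}\in\Z_{++}^r$ the function $\Delta_{\mathbf{m}}$ is, on the identity component, a lowest-weight vector: under the action of a suitable Borel subgroup of $L^{\C}$ adapted to the Jordan frame $\{c_1,\dots,c_r\}$ and the Peirce decomposition, $\Delta_{\mathbf{m}}$ transforms by an explicit character determined by $\mathbf{m}$, and the partial determinants $\Delta_1,\dots,\Delta_r$ are algebraically independent. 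This shows $\mathcal{P}_{\mathbf{m}}(V^{\C})$ is a highest-weight (equivalently lowest-weight) module with a well-defined extreme weight $\mathbf{m}$, hence irreducible, and distinct $\mathbf{m}$ give inequivalent modules because the extreme weights differ.

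Next I would prove the sum $\sum_{\mathbf{m}}\mathcal{P}_{\mathbf{m}}(V^{\C})$ exhausts all of $\mathcal{P}(V^{\C})$. One clean route is a dimension/character count via the Gindikin Gamma integral: the generating identity
\[
\int_{\Omega}e^{-(x|y)}\Delta_{\mathbf{m}}(x)\,\Delta(x)^{-\frac{n}{r}}\,dx
=(\text{const})\,\Delta_{\mathbf{m}}(y^{-1})\,\Delta(y)^{-\frac{n}{r}}
\]
together with the known evaluation of $\int_{\Omega}e^{-(x|y)}\Delta(x)^{s-\frac{n}{r}}dx$ in terms of $\Gamma_{\Omega}$ lets one expand $e^{(x|y)}$ (more precisely $\Delta(e+z)$-type generating functions, or the Faraut--Korányi "spherical" expansion of $e^{(x|y)}=\sum_{\mathbf{m}}\frac{1}{(\frac{n}{r})_{\mathbf{m}}}\Phi_{\mathbf{m}}(x,y)$) and thereby identify, degree by degree, that the $\mathcal{P}_{\mathbf{m}}$ with $|\mathbf{m}|=k$ account for the full dimension $\binom{n+k-1}{k}$ of $\mathcal{P}_k(V^{\C})$. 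Equivalently, and perhaps more transparently, one invokes that the $K_L$-fixed (or $K$-spherical) vectors in $\mathcal{P}_k(V^{\C})$ are spanned by the restrictions of the symmetric-cone spherical polynomials, whose number matches the partitions of $k$ into at most $r$ parts — exactly the index set $\{\mathbf{m}\in\Z_{++}^r:|\mathbf{m}|=k\}$ — and then each such spherical vector generates one irreducible summand.

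Finally I would assemble the pieces: directness of the sum follows from inequivalence of the summands (Schur), spanning follows from the degree-by-degree count above, and irreducibility of each $\mathcal{P}_{\mathbf{m}}(V^{\C})$ from the lowest-weight argument. The main obstacle, and the step deserving the most care, is the spanning/exhaustion claim: establishing that every homogeneous polynomial lies in the span of the $L^{\C}$-translates of the $\Delta_{\mathbf{m}}$ requires either a nontrivial dimension count (matching $\dim\mathcal{P}_k$ with a sum of dimensions of the modules with highest weight $\mathbf{m}$, $|\mathbf{m}|=k$, which in turn needs the Weyl-type dimension formula for these specific $L^{\C}$-representations) or a clean argument that the space of $K_L$-invariants in $\mathcal{P}_k(V^{\C})$ has dimension equal to the number of partitions of $k$ with at most $r$ parts. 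Since this is the classical Hua--Kostant--Schmid theorem, I would in practice cite \cite[Theorem XI.2.4]{FK} for the spanning and focus any original exposition on the lowest-weight/irreducibility half, which is all that is needed downstream.
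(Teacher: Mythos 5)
First, note that the paper offers no proof of Theorem \ref{HKS}: it is quoted as a known result with the citation \cite[Theorem XI.2.4]{FK}, so there is no in-paper argument to compare yours against. Your sketch is, in outline, the standard Schmid/Faraut--Kor\'anyi proof: complete reducibility of each homogeneous component under the reductive group $L^{\C}$; identification of $\Delta_{\mathbf{m}}$ as an extreme weight vector for a Borel subgroup adapted to the Jordan frame and the Peirce decomposition (whence the cyclic module $\mathcal{P}_{\mathbf{m}}(V^{\C})$ it generates is irreducible, and the distinct characters force inequivalence and hence directness of the sum); and an exhaustion argument by counting $K_L$-fixed vectors degree by degree. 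This is the right architecture.

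Two cautions on the exhaustion step, which you correctly flag as the delicate one. (i) Your first proposed route is circular: the expansion $e^{(x|y)}=\sum_{\mathbf{m}}\frac{d_{\mathbf{m}}}{\left(\frac{n}{r}\right)_{\mathbf{m}}}\Phi_{\mathbf{m}}(\cdot)$ and the spherical evaluation of $\int_{\Omega}e^{-(x|y)}\Delta_{\mathbf{m}}(x)\Delta(x)^{-\frac{n}{r}}dx$ are in \cite{FK} themselves derived from the Hua--Kostant--Schmid decomposition (Chapters XI--XII), so they cannot be used to establish it; likewise, matching $\dim\mathcal{P}_k(V^{\C})=\binom{n+k-1}{k}$ against $\sum_{|\mathbf{m}|=k}\dim\mathcal{P}_{\mathbf{m}}(V^{\C})$ presupposes knowledge of the individual dimensions $d_{\mathbf{m}}$, which is harder than the theorem itself. (ii) The second route is the correct one, but it silently uses two nontrivial inputs: every irreducible constituent of $\mathcal{P}(V^{\C})$ contains a nonzero $K_L$-fixed vector (this follows because restriction to the open orbit $\Omega=L\cdot e\cong L/K_L$ is injective on polynomials, so each constituent embeds into functions on $L/K_L$ and Frobenius reciprocity applies), and the $K_L$-fixed subspace of each constituent is at most one-dimensional (because $\Omega$ is a Riemannian symmetric space, so $(L,K_L)$ is a Gelfand pair). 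With these two facts and the Chevalley-type restriction identifying $K_L$-invariant polynomials of degree $k$ with symmetric polynomials of degree $k$ in the eigenvalues $t_1,\dots,t_r$---whose dimension is the number of partitions of $k$ into at most $r$ parts, i.e.\ the number of $\mathbf{m}\in\Z_{++}^r$ with $|\mathbf{m}|=k$---the count closes and exhaustion follows. Since you ultimately propose to cite \cite{FK} for exactly this half, the proposal is acceptable as written; but if you do write it out, use route (ii) with the two spherical-pair inputs made explicit, and drop route (i).
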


Since $\Delta_l$ vanishes on $\overline{\mathcal{X}_{l-1}}$, all polynomials in $\mathcal{P}_{\mathbf{m}}(V^{\C})$ 
vanish on $\overline{\mathcal{X}_{l-1}}$ if and only if $m_l\ne 0$. 

We write $d_{\mathbf{m}}:=\dim\mathcal{P}_{\mathbf{m}}(V^{\C})$, and 
$\Phi_{\mathbf{m}}(x):=\int_{K_L}\Delta_{\mathbf{m}}(kx)dk$. 
Then the $K_L$-fixed subspace in $\mathcal{P}_{\mathbf{m}}(V^{\C})$ is spanned by $\Phi_{\mathbf{m}}$ 
(see \cite[Proposition XI.3.1]{FK}).

\subsection{Inner products on $\mathcal{P}(V^{\C})$}
For $f,g\in \mathcal{P}(V^{\C})$, we denote the \textit{Fischer inner product} by $\langle f,g\rangle_F$: 
\[ \langle f,g\rangle_F:=\frac{1}{\pi^n}\int_{V^{\C}}f(w)\overline{g(w)}e^{-(w|w)}dw
=\left.f\left(\frac{\partial}{\partial w}\right)\bar{g}(w)\right|_{w=0} \]
(For the second equality see \cite[Proposition XI.1.1]{FK}). 
Then the reproducing kernel of $\overline{\mathcal{P}(V^{\C})}^F$ (Hilbert completion of $\mathcal{P}(V^{\C})$) 
is given by $e^{(z|w)}$. We denote by $K^{\mathbf{m}}(z,w)=K^{\mathbf{m}}_w(z)$ the reproducing kernel of 
$\mathcal{P}_{\mathbf{m}}(V^{\C})$ with respect to $\langle\cdot,\cdot\rangle_F$. Then clearly, 
\[ e^{(z|w)}=\sum_{\mathbf{m}\in\Z_{++}^r}K^{\mathbf{m}}(z,w), \]
Also, by \cite[Proposition XI.3.3, Propsition XI.4.1.(ii)]{FK}, we have 
\begin{gather*}
K^{\mathbf{m}}(gz,w)=K^{\mathbf{m}}(z,g^*w) \qquad \text{for any}\; g\in \mathrm{Str}(V^{\C}), \\
K^{\mathbf{m}}_e(z)=\frac{1}{\Vert \Phi_{\mathbf{m}}\Vert_F^2}\Phi_{\mathbf{m}}(z)
=\frac{d_\mathbf{m}}{\left(\frac{n}{r}\right)_{\mathbf{m}}}\Phi_{\mathbf{m}}(z)
\end{gather*}
and 
\[ K^{\mathbf{m}}(x,\bar{x})=K^{\mathbf{m}}(x^2,e) \]
for $x\in V$, and therefore for any $x\in V^{\C}$ by analytic continuation. 

Also, for $\lambda>\frac{2n}{r}-1$, we denote the \textit{weighted Bergman inner product} on $D$ by
$\langle \cdot,\cdot \rangle_{\lambda}$: 
\[ \langle f,g\rangle_{\lambda}
:=\frac{1}{\pi^n}\frac{\Gamma_{\Omega}(\lambda)}{\Gamma_{\Omega}\left(\lambda-\frac{n}{r}\right)}
\int_Df(w)\overline{g(w)}h(w,w)^{\lambda-\frac{2n}{r}}dw. \]
Then, these two inner products are related as follows:

\begin{theorem}[Faraut--Kor\'anyi, see {\cite[Theorem XIII.2.7]{FK}}]
If $f,g\in\mathcal{P}(V^{\C})$ are decomposed as $f=\sum_{\mathbf{m}\in\Z_{++}^r}f_{\mathbf{m}}$, 
$g=\sum_{\mathbf{m}\in\Z_{++}^r}g_{\mathbf{m}}$ ($f_{\mathbf{m}},g_{\mathbf{m}}\in\mathcal{P}_\mathbf{m}(V^{\C})$), 
then
\begin{equation}\label{inner1}
\langle f,g\rangle_{\lambda}=\sum_{\mathbf{m}\in\Z_{++}^r}\frac{1}{(\lambda)_\mathbf{m}}
\langle f_\mathbf{m},g_\mathbf{m}\rangle_F. 
\end{equation}
\end{theorem}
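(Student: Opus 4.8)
The plan is to prove the Faraut--Korányi formula (\ref{inner1}) by combining three ingredients: the Peirce/Hua--Kostant--Schmid decomposition $\mathcal{P}(V^{\C})=\bigoplus_{\mathbf{m}}\mathcal{P}_{\mathbf{m}}(V^{\C})$ (Theorem \ref{HKS}), the $L^{\C}$-equivariance of both inner products, and an explicit evaluation on the $K_L$-spherical vectors $\Phi_{\mathbf{m}}$. First I would observe that the weighted Bergman inner product $\langle\cdot,\cdot\rangle_{\lambda}$ is, up to the normalizing constant, invariant under the action of $K=\mathrm{Aut}_{\mathrm{JTS}}(V^{\C})_0$ (since $K$ preserves both $D$ and $h(w,w)$, hence the measure $h(w,w)^{\lambda-\frac{2n}{r}}dw$), and the Fischer inner product is $K$-invariant as well (it is even $U(n)$-invariant in the obvious sense). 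Because the $\mathcal{P}_{\mathbf{m}}(V^{\C})$ are mutually inequivalent irreducible $L^{\C}$-modules, and a fortiori inequivalent under the maximal compact $K_L$ (one uses that the lowest $K_L$-type already distinguishes them), Schur's lemma forces both $\langle\cdot,\cdot\rangle_{\lambda}$ and $\langle\cdot,\cdot\rangle_F$ to be block-diagonal with respect to the decomposition, and on each block $\mathcal{P}_{\mathbf{m}}(V^{\C})$ they must be proportional: there is a constant $c_{\mathbf{m}}(\lambda)$ with $\langle f_{\mathbf{m}},g_{\mathbf{m}}\rangle_{\lambda}=c_{\mathbf{m}}(\lambda)\langle f_{\mathbf{m}},g_{\mathbf{m}}\rangle_F$. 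So the cross terms vanish and the whole content of the theorem is the identification $c_{\mathbf{m}}(\lambda)=1/(\lambda)_{\mathbf{m}}$.

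Next I would pin down the constant $c_{\mathbf{m}}(\lambda)$ by testing on the $K_L$-fixed polynomial $\Phi_{\mathbf{m}}$, which by \cite[Proposition XI.3.1]{FK} spans the (one-dimensional) $K_L$-fixed subspace of $\mathcal{P}_{\mathbf{m}}(V^{\C})$. Thus $c_{\mathbf{m}}(\lambda)=\langle\Phi_{\mathbf{m}},\Phi_{\mathbf{m}}\rangle_{\lambda}/\langle\Phi_{\mathbf{m}},\Phi_{\mathbf{m}}\rangle_F$, and the denominator $\Vert\Phi_{\mathbf{m}}\Vert_F^2=(\tfrac{n}{r})_{\mathbf{m}}/d_{\mathbf{m}}$ is already recorded in the excerpt. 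The numerator is the genuinely analytic step: one must compute
\[
\langle\Phi_{\mathbf{m}},\Phi_{\mathbf{m}}\rangle_{\lambda}
=\frac{1}{\pi^n}\frac{\Gamma_{\Omega}(\lambda)}{\Gamma_{\Omega}(\lambda-\frac{n}{r})}
\int_D|\Phi_{\mathbf{m}}(w)|^2 h(w,w)^{\lambda-\frac{2n}{r}}\,dw.
\]
For this I would pass to the realization $D\cong V+\sqrt{-1}\Omega$ (or use the Cayley-type transform), diagonalize via the polar decomposition $w=k\sum t_jc_j$ over $K$ and the Cartan decomposition of $\Omega=L/K_L$, and reduce the integral to one over the torus coordinates $t_1,\dots,t_r$ against a Jacobian of the form $\prod_{j<k}|t_j^2-t_k^2|^d$. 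On that chart $\Phi_{\mathbf{m}}$ becomes a Jack-type symmetric polynomial in the $t_j^2$ and the integral becomes a Selberg-type integral whose value is a ratio of Gindikin Gamma functions; matching everything against the definition $(\lambda)_{\mathbf{m}}=\Gamma_{\Omega}(\lambda+\mathbf{m})/\Gamma_{\Omega}(\lambda)$ yields $\langle\Phi_{\mathbf{m}},\Phi_{\mathbf{m}}\rangle_{\lambda}=(\tfrac{n}{r})_{\mathbf{m}}/(d_{\mathbf{m}}(\lambda)_{\mathbf{m}})$, hence $c_{\mathbf{m}}(\lambda)=1/(\lambda)_{\mathbf{m}}$, and the theorem follows for $\lambda>\tfrac{2n}{r}-1$ by summing over $\mathbf{m}$.

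An alternative, and in many ways cleaner, route for the numerator is to avoid the Selberg integral altogether: use the reproducing-kernel identity
\[
\sum_{\mathbf{m}\in\Z_{++}^r}\frac{d_{\mathbf{m}}}{(\tfrac{n}{r})_{\mathbf{m}}}\,\Phi_{\mathbf{m}}(z)
=e^{(z|\bar{w})}\big|_{\text{diag}},\qquad
h(z,w)^{-\lambda}=\sum_{\mathbf{m}\in\Z_{++}^r}(\lambda)_{\mathbf{m}}\frac{d_{\mathbf{m}}}{(\tfrac{n}{r})_{\mathbf{m}}}\Phi_{\mathbf{m}}(z,w)
\]
(the binomial/Faraut--Korányi expansion of the generic norm), together with the fact that the weighted Bergman space has reproducing kernel proportional to $h(z,w)^{-\lambda}$; expanding $h(z,w)^{-\lambda}$ into its $\mathcal{P}_{\mathbf{m}}$-isotypic pieces and comparing with $e^{(z|\bar w)}=\sum_{\mathbf{m}}K^{\mathbf{m}}(z,w)$ immediately gives $K^{\mathbf{m}}$ versus its Bergman analogue, and reading off the ratio of norms produces $(\lambda)_{\mathbf{m}}$ directly. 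I would present the Schur-lemma reduction first (it is short and conceptual), then do the constant via whichever of these two computations is shortest to write cleanly. The main obstacle is precisely the evaluation of $\langle\Phi_{\mathbf{m}},\Phi_{\mathbf{m}}\rangle_{\lambda}$: one needs either the Gindikin--Selberg integral formula for symmetric cones in the correct normalization, or the binomial expansion of the generic norm $h$, both of which require some care about Gamma-function normalizations and the precise constant $\Gamma_{\Omega}(\lambda)/\Gamma_{\Omega}(\lambda-\tfrac{n}{r})$ that makes the Bergman kernel equal to $h^{-\lambda}$; everything else (block-diagonality, vanishing of cross terms, spanning by $\Phi_{\mathbf{m}}$) is formal representation theory already available in the excerpt.
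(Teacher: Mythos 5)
The paper gives no proof of this statement: it is imported wholesale from Faraut--Kor\'anyi \cite[Theorem XIII.2.7]{FK}, so there is nothing internal to compare your argument against. Judged on its own terms, your plan is essentially the textbook proof of that theorem: block-diagonalize both sesquilinear forms via Schur's lemma, evaluate the ratio of norms on the spherical vector $\Phi_{\mathbf{m}}$, and reduce $\int_D|\Phi_{\mathbf{m}}(w)|^2h(w,w)^{\lambda-\frac{2n}{r}}dw$ by polar decomposition to a beta-type integral over $\Omega\cap(e-\Omega)$ whose value is a ratio of Gindikin Gamma functions, producing exactly $(\lambda)_{\mathbf{m}}$. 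Your alternative route (reproducing kernel of the weighted Bergman space equals $h(z,w)^{-\lambda}$, compared term by term with $e^{(z|w)}=\sum_{\mathbf{m}}K^{\mathbf{m}}(z,w)$ via the binomial expansion of $h^{-\lambda}$) is also standard and valid, provided the kernel identity is established independently --- via the quasi-invariance of $h(w,w)^{\lambda-\frac{2n}{r}}dw$ under $G$ and transitivity on $D$ --- since some expositions derive that identity \emph{from} (\ref{inner1}), and one must not argue in a circle.

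One genuine correction to the Schur-lemma step: the group under which \emph{both} $\langle\cdot,\cdot\rangle_F$ and $\langle\cdot,\cdot\rangle_\lambda$ are invariant, and under which the $\mathcal{P}_{\mathbf{m}}(V^{\C})$ of Theorem \ref{HKS} remain irreducible and mutually inequivalent, is $K=\mathrm{Aut}_{\mathrm{JTS}}(V^{\C})_0$, the maximal compact subgroup of $L^{\C}$ --- not $K_L=\mathrm{Aut}_{\mathrm{J.Alg}}(V)_0$, under which each $\mathcal{P}_{\mathbf{m}}(V^{\C})$ is merely a multiplicity-free spherical module and in general reducible, so Schur's lemma would not apply there. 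Your ``a fortiori'' is also backwards as stated: restriction to a subgroup can destroy both irreducibility and inequivalence; what rescues the argument for $K\subset L^{\C}$ is Weyl's unitarian trick for a complex reductive group and its maximal compact. With that repaired, the reduction to computing $c_{\mathbf{m}}(\lambda)=\langle\Phi_{\mathbf{m}},\Phi_{\mathbf{m}}\rangle_\lambda/\Vert\Phi_{\mathbf{m}}\Vert_F^2$ is sound, and the remaining Selberg/beta evaluation, which you correctly identify as the analytic heart, is precisely \cite[Chapter XI--XIII]{FK}; the whole argument is carried out there for $\lambda>\frac{2n}{r}-1$, which is all the statement requires.
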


Although the left hand side is only defined for $\lambda>\frac{2n}{r}-1$, 
the right hand side extends meromorphically for $\lambda\in\C$. 
Therefore we can redefine $\langle\cdot,\cdot\rangle_\lambda$ with this formula for any $\lambda\in\C$ by 
restricting the domain. For $\lambda\in\C$ we set 
\begin{align}\label{ranklambda}
\rank\lambda:=&\max\left\{ l\in\{0,1,\ldots,r\}:
(\lambda)_{\mathbf{m}}\ne 0\; \text{for any}\; \mathbf{m}\in\Z_{++}^r\cap\{ m_{l+1}=0\}\right\}\nonumber \\ 
=&\begin{cases} l&\text{if}\; \lambda\in\left(l\frac{d}{2}+\Z_{\le 0}\right)\setminus
\bigcup_{j=0}^{l-1} \left(j\frac{d}{2}+\Z_{\le 0}\right) \quad (l=0,1,\ldots,r-1),\\
r&\text{if}\; \lambda\notin\bigcup_{j=0}^{r-1} \left(j\frac{d}{2}+\Z_{\le 0}\right). \end{cases} 
\end{align}
For example, if $d=2$, $i.e.$, $V=\mathrm{Herm}(r,\C)$, then 
\[ \rank\lambda=\begin{cases}0&(\lambda\in\Z_{\le 0}),\\ l&(\lambda=l,\ l=1,\ldots,r-1),\\ 
r&(\lambda\notin r-1+\Z_{\le 0}).\end{cases} \] 
Then $\langle\cdot,\cdot\rangle_{\lambda}$ defines a sesquilinear form on 
$\bigoplus_{\mathbf{m}\in\Z_{++}^r,\ m_{\rank\lambda+1}=0}\mathcal{P}_{\mathbf{m}}(V^{\C})$. 
This form $\langle\cdot,\cdot\rangle_{\lambda}$ is positive definite if and only if 
\begin{equation}\label{Wallach}
\lambda\in\mathcal{W}:=
\left\{0,\frac{d}{2},\ldots,(r-1)\frac{d}{2}\right\}\cup\left((r-1)\frac{d}{2},\infty\right). 
\end{equation}
This set $\mathcal{W}$ is called the \textit{(Berezin--)Wallach set} (see \cite{W} or \cite{B}).

\subsection{Invariant differential operators}
For $\lambda\in\C$ and $k\in\Z_{\ge 0}$, we recall the differential operators $D^{(k)}$ from \cite[Section XIV.2]{FK}: 
\[ D^{(k)}(\lambda):=\Delta(x)^{\frac{n}{r}-\lambda}\Delta\left(\frac{\partial}{\partial x}\right)^k
\Delta(x)^{\lambda-\frac{n}{r}+k} \]
where $\Delta\left(\frac{\partial}{\partial x}\right)$ is the differential operator characterized by
$\Delta\left(\frac{\partial}{\partial x}\right)e^{(x|y)}=\Delta(y)e^{(x|y)}$. 
Then these operators commute with the $L^{\C}$-action 
($i.e.$, $D^{(k)}(\lambda)(f\circ l)=(D^{(k)}(\lambda)f)\circ l$ for $f\in\mathcal{P}(V^{\C})$ and $l\in L^{\C}$). 
Moreover, we have 

\begin{proposition}\label{invdiffop}
\[ D^{(k)}(\lambda)e^{(x|y)}=\sum_{\mathbf{m}\in\Z_{++}^r,\ |\mathbf{m}|\le rk}
(-1)^{|\mathbf{m}|}(-k)_{\mathbf{m}}(\lambda+\mathbf{m})_{k-\mathbf{m}}K^{\mathbf{m}}(x,y)e^{(x|y)}, \]
and if $(\lambda)_{\mathbf{m}}\ne 0$ for any $\mathbf{m}\in\Z_{++}^r$, $|\mathbf{m}|\le rk$, 
\[ D^{(k)}(\lambda)e^{(x|y)}=(\lambda)_k{}_1F_1(-k,\lambda;-x,y)e^{(x|y)} \]
where 
\begin{equation}\label{1F1}
{}_1F_1(-k,\lambda;-x,y):=\sum_{\mathbf{m}\in\Z_{++}^r,\ |\mathbf{m}|\le rk}
\frac{(-1)^{|\mathbf{m}|}(-k)_{\mathbf{m}}}{(\lambda)_{\mathbf{m}}}K^{\mathbf{m}}(x,y). 
\end{equation}
\end{proposition}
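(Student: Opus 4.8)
The plan is to compute $D^{(k)}(\lambda)e^{(x|y)}$ directly by exploiting the $L^{\C}$-equivariance of the operator together with the decomposition $e^{(x|y)}=\sum_{\mathbf{m}}K^{\mathbf{m}}(x,y)$. First I would reduce the computation to the special case $y=e$. Indeed, for generic (invertible) $y\in V^{\C}$ there is $l\in L^{\C}$ with $le=y$, and since $K^{\mathbf{m}}(x,y)=K^{\mathbf{m}}(x,le)=K^{\mathbf{m}}(l^*x,e)$ while $D^{(k)}(\lambda)$ commutes with precomposition by elements of $L^{\C}$, it suffices to evaluate $D^{(k)}(\lambda)$ on $e^{(x|e)}=e^{\tr x}$ and then transport the answer; the general formula follows by analytic continuation in $y$ since both sides are holomorphic. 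So the core is to show
\[
D^{(k)}(\lambda)e^{\tr x}=\sum_{\mathbf{m}\in\Z_{++}^r,\ |\mathbf{m}|\le rk}(-1)^{|\mathbf{m}|}(-k)_{\mathbf{m}}(\lambda+\mathbf{m})_{k-\mathbf{m}}K^{\mathbf{m}}_e(x)\,e^{\tr x}.
\]

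The second step is to identify $D^{(k)}(\lambda)e^{\tr x}$ as a $K_L$-invariant polynomial times $e^{\tr x}$ and to pin down its expansion coefficients via the Fischer inner product. By the definition $D^{(k)}(\lambda)=\Delta(x)^{n/r-\lambda}\Delta(\partial/\partial x)^k\Delta(x)^{\lambda-n/r+k}$ one expects $D^{(k)}(\lambda)e^{\tr x}=p_{\lambda,k}(x)e^{\tr x}$ for a polynomial $p_{\lambda,k}$ of degree $\le rk$; it is $K_L$-invariant because $D^{(k)}(\lambda)$ commutes with $K_L\subset L^{\C}$ and $e^{\tr x}$ is $K_L$-invariant. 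Hence $p_{\lambda,k}=\sum_{|\mathbf{m}|\le rk}a_{\mathbf{m}}\Phi_{\mathbf{m}}$, and by the $K_L$-fixed-vector description plus $K^{\mathbf{m}}_e=(d_{\mathbf{m}}/(n/r)_{\mathbf{m}})\Phi_{\mathbf{m}}$ it is equivalent to determine the coefficient of each $K^{\mathbf{m}}_e$. I would extract these coefficients using the Fischer pairing against $\Delta_{\mathbf{m}}$ (or against $K^{\mathbf{m}}_e$ itself), turning $\Delta(\partial/\partial x)^k$ into a finite-difference-type operator acting on $\Delta(x)^{\lambda-n/r+k}$; the relevant identities are the ones recorded in \cite[Section XIV.2]{FK} relating $\Delta(\partial/\partial x)^k\Delta_{\mathbf{s}}$ to Pochhammer symbols $(\mathbf{s})_{\mathbf{m}}$-type factors. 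Evaluating at $\mathbf{s}=(\lambda-n/r+k,\ldots)$ and using the multiplicativity relations (\ref{Poch}) for Pochhammer symbols should produce exactly the factor $(-1)^{|\mathbf{m}|}(-k)_{\mathbf{m}}(\lambda+\mathbf{m})_{k-\mathbf{m}}$. The combinatorial bookkeeping here — matching $(\mathbf{s})_{\mathbf{m}}$ against the stated product and verifying the support condition $|\mathbf{m}|\le rk$ comes out of $(-k)_{\mathbf{m}}=0$ when $m_1>k$ — is the part requiring care.

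The third step is the hypergeometric rewriting under the nonvanishing hypothesis. Assuming $(\lambda)_{\mathbf{m}}\ne 0$ for all $\mathbf{m}$ with $|\mathbf{m}|\le rk$, I would rewrite the coefficient $(-1)^{|\mathbf{m}|}(-k)_{\mathbf{m}}(\lambda+\mathbf{m})_{k-\mathbf{m}}$ by pulling out the global factor $(\lambda)_k$: using $(\lambda)_{\mathbf{m}+(k-\mathbf{m})\text{-part}}$-type splitting, namely $(\lambda)_k=(\lambda)_{\mathbf{m}}(\lambda+\mathbf{m})_{k-\mathbf{m}}$ (the scalar-versus-multi-index form of the first identity in (\ref{Poch}), valid since $(\lambda)_k$ means $(\lambda,\dots,\lambda)$ indexed by $(k,\dots,k)$ and $\mathbf{m}\le(k,\dots,k)$ coordinatewise once $|\mathbf{m}|\le rk$ forces... actually one must be slightly careful: the identity needed is $(\lambda+\mathbf{m})_{k-\mathbf{m}}(\lambda)_{\mathbf{m}}=(\lambda)_k$ in the appropriate indexing), so that
\[
(-1)^{|\mathbf{m}|}(-k)_{\mathbf{m}}(\lambda+\mathbf{m})_{k-\mathbf{m}}=(\lambda)_k\frac{(-1)^{|\mathbf{m}|}(-k)_{\mathbf{m}}}{(\lambda)_{\mathbf{m}}},
\]
and then the sum collapses into $(\lambda)_k\,{}_1F_1(-k,\lambda;-x,y)$ by the very definition (\ref{1F1}). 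The main obstacle I anticipate is not any single hard idea but getting the index conventions exactly right: the statement mixes scalar Pochhammer symbols $(\lambda)_k$, multi-index symbols $(-k)_{\mathbf{m}}$ and $(\lambda+\mathbf{m})_{k-\mathbf{m}}$ where "$k-\mathbf{m}$" must be interpreted as $(k,\ldots,k)-\mathbf{m}$, and one needs $m_1\le k$ (forced by $(-k)_{\mathbf{m}}\ne 0$) for these to make sense; verifying that the factorization $(\lambda)_k=(\lambda)_{\mathbf{m}}(\lambda+\mathbf{m})_{k-\mathbf{m}}$ genuinely holds coordinatewise via (\ref{Poch}), and that no spurious poles are introduced, is where I would spend the most effort. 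The reduction to $y=e$ and the $K_L$-invariance argument are routine given the equivariance properties already established before Proposition \ref{invdiffop}.
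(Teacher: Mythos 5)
Your skeleton matches the paper's: reduce to $y=e$ by $L^{\C}$-equivariance (the paper does this in the reverse direction, computing at $y=e$ and then substituting $x\mapsto P(y^{\frac{1}{2}})x$ for $y\in\Omega$ followed by analytic continuation), expand the resulting $K_L$-invariant polynomial in the $\Phi_{\mathbf{m}}$, and finally pull out $(\lambda)_k$ via $(\lambda)_{\mathbf{m}}(\lambda+\mathbf{m})_{k-\mathbf{m}}=(\lambda)_k$, which is exactly the paper's last step and which you handle correctly, including the caveat about interpreting $k-\mathbf{m}$ as $(k,\ldots,k)-\mathbf{m}$.

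However, there is a genuine gap at the heart of the argument: the actual computation of the coefficients $(-1)^{|\mathbf{m}|}(-k)_{\mathbf{m}}(\lambda+\mathbf{m})_{k-\mathbf{m}}$. You propose to get them by ``Fischer pairing against $\Delta_{\mathbf{m}}$'' combined with identities for $\Delta\!\left(\frac{\partial}{\partial x}\right)^k$ acting on generalized power functions $\Delta_{\mathbf{s}}$. This does not work as stated, because $D^{(k)}(\lambda)$ is applied to the \emph{product} $\Delta(x)^{\lambda-\frac{n}{r}+k}e^{(x|e)}$, not to a power function alone; the Bernstein-type eigenvalue identities for $\Delta\!\left(\frac{\partial}{\partial x}\right)^k\Delta_{\mathbf{s}}$ give no direct handle on this product, and a Leibniz expansion is unmanageable. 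The paper's key device, which your proposal is missing, is to represent the product as a single Laplace transform,
\[ \Delta(x)^{\lambda-\frac{n}{r}+k}e^{(x|e)}=\frac{1}{\Gamma_{\Omega}\left(-\lambda+\frac{n}{r}-k\right)}\int_{\Omega}e^{(x|e-y)}\Delta(y)^{-\lambda-k}\,dy \]
(valid for $x\in\Omega$ and $\lambda$ sufficiently negative, with analytic continuation at the end), so that $\Delta\!\left(\frac{\partial}{\partial x}\right)^k$ acts under the integral simply as multiplication by $\Delta(e-y)^k$. One then expands $\Delta(e-y)^k=\sum_{|\mathbf{m}|\le rk}d_{\mathbf{m}}\frac{(-k)_{\mathbf{m}}}{\left(\frac{n}{r}\right)_{\mathbf{m}}}\Phi_{\mathbf{m}}(y)$, inverts the Laplace transform term by term to produce $\Phi_{\mathbf{m}}(x^{-1})\Delta(x)^{\lambda-\frac{n}{r}+k}$, and uses the isomorphism $p\mapsto\Delta(x)^kp(x^{-1})$ (giving $\Phi_{\mathbf{m}}(x^{-1})\Delta(x)^k=\Phi_{k-\mathbf{m}^*}(x)$ and $d_{\mathbf{m}}=d_{k-\mathbf{m}^*}$) plus the reflection identity in (\ref{Poch}) to reindex. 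This is also where the degree bound $|\mathbf{m}|\le rk$ and the incidental fact you merely ``expect'' (that $D^{(k)}(\lambda)e^{\tr x}$ is a polynomial of degree $\le rk$ times $e^{\tr x}$) actually get proved. Without this (or an equivalent substitute, e.g.\ a Pieri-type multiplication formula for $\Phi_{\mathbf{m}}(x)e^{\tr x}$, which would be no easier), the proposal identifies what must be computed but does not compute it.
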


\begin{proof}
We follow the proof of \cite[Proposition XIV.1.5]{FK}. For $x\in\Omega$ and $\lambda<-k+1$, 
\begin{align*}
&D^{(k)}(\lambda)e^{(x|e)}=\Delta(x)^{\frac{n}{r}-\lambda}\Delta\left(\frac{\partial}{\partial x}\right)^k
\Delta(x)^{\lambda-\frac{n}{r}+k}e^{(x|e)}\\
=&\Delta(x)^{\frac{n}{r}-\lambda}\Delta\left(\frac{\partial}{\partial x}\right)^k
\frac{1}{\Gamma_{\Omega}\left(-\lambda+\frac{n}{r}-k\right)}\int_{\Omega}e^{(x|e-y)}
\Delta(y)^{-\lambda+\frac{n}{r}-k}\Delta(y)^{-\frac{n}{r}}dy\\
=&\Delta(x)^{\frac{n}{r}-\lambda}\frac{1}{\Gamma_{\Omega}\left(-\lambda+\frac{n}{r}-k\right)}
\int_{\Omega}e^{(x|e-y)}\Delta(e-y)^k\Delta(y)^{-\lambda-k}dy\\
=&\Delta(x)^{\frac{n}{r}-\lambda}\frac{1}{\Gamma_{\Omega}\left(-\lambda+\frac{n}{r}-k\right)}
\sum_{\mathbf{m}\in\Z_{++}^r,\ |\mathbf{m}|\le rk}d_{\mathbf{m}}
\frac{(-k)_{\mathbf{m}}}{\left(\frac{n}{r}\right)_\mathbf{m}}
\int_{\Omega}e^{(x|e-y)}\Phi_{\mathbf{m}}(y)\Delta(y)^{-\lambda-k}dy\\
=&\Delta(x)^{\frac{n}{r}-\lambda}\sum_{\mathbf{m}\in\Z_{++}^r,\ |\mathbf{m}|\le rk}d_{\mathbf{m}}
\frac{(-k)_{\mathbf{m}}}{\left(\frac{n}{r}\right)_\mathbf{m}}
\frac{\Gamma_{\Omega}\left(\mathbf{m}-\lambda+\frac{n}{r}-k\right)}
{\Gamma_{\Omega}\left(-\lambda+\frac{n}{r}-k\right)}\Phi_{\mathbf{m}}(x^{-1})\Delta(x)^{\lambda-\frac{n}{r}+k}e^{(x|e)}\\
=&\sum_{\mathbf{m}\in\Z_{++}^r,\ |\mathbf{m}|\le rk}
\frac{d_{\mathbf{m}}(-k)_{\mathbf{m}}\left(-\lambda+\frac{n}{r}-k\right)_{\mathbf{m}}}
{\left(\frac{n}{r}\right)_\mathbf{m}}\Phi_{k-\mathbf{m}^*}(x)e^{(x|e)}\\
=&\sum_{\mathbf{m}\in\Z_{++}^r,\ |\mathbf{m}|\le rk}
\frac{d_{k-\mathbf{m}^*}(-k)_{k-\mathbf{m}^*}\left(-\lambda+\frac{n}{r}-k\right)_{k-\mathbf{m}^*}}
{\left(\frac{n}{r}\right)_{k-\mathbf{m}^*}}\Phi_{\mathbf{m}}(x)e^{(x|e)}. 
\end{align*}
Here we used \cite[Lemma XI.2.3]{FK} at the 2nd and 5th equalities, and 
\cite[Corollary XII.1.3]{FK} at the 4th equality. 
At the 6th equality we used $\Phi_{\mathbf{m}}(x^{-1})\Delta(x)^k=\Phi_{k-\mathbf{m}^*}(x)$, 
which follows from the linear isomorphism 
$\mathcal{P}_{\mathbf{m}}(V^{\C})\to\mathcal{P}_{k-\mathbf{m}^*}(V^{\C})$, $p\mapsto \Delta(x)^kp(x^{-1})$. 
Now, $d_{\mathbf{m}}=d_{k-\mathbf{m}^*}$ holds by this isomorphism, and by (\ref{Poch}), 
\begin{gather*}
\frac{(-k)_{k-\mathbf{m}^*}}{\left(\frac{n}{r}\right)_{k-\mathbf{m}^*}}
=\frac{(-1)^{|k-\mathbf{m}^*|}\left(\frac{n}{r}+\mathbf{m}\right)_{k-\mathbf{m}}}
{\left(\frac{n}{r}\right)_{k-\mathbf{m}^*}}
=\frac{(-1)^{|k-\mathbf{m}^*|}\left(\frac{n}{r}\right)_k}
{\left(\frac{n}{r}\right)_{\mathbf{m}}\left(\frac{n}{r}\right)_{k-\mathbf{m}^*}}
=\frac{(-1)^{|k-\mathbf{m}^*|}(-k)_{\mathbf{m}}}{(-1)^{|\mathbf{m}|}\left(\frac{n}{r}\right)_{\mathbf{m}}},\\
\left(-\lambda+\frac{n}{r}-k\right)_{k-\mathbf{m}^*}=(-1)^{|k-\mathbf{m}^*|}(\lambda+\mathbf{m})_{k-\mathbf{m}}. 
\end{gather*}
Therefore, 
\[ D^{(k)}(\lambda)e^{(x|e)}=\sum_{\mathbf{m}\in\Z_{++}^r,\ |\mathbf{m}|\le rk}
(-1)^{|\mathbf{m}|}(-k)_{\mathbf{m}}(\lambda+\mathbf{m})_{k-\mathbf{m}}
\frac{d_{\mathbf{m}}}{\left(\frac{n}{r}\right)_{\mathbf{m}}}\Phi_{\mathbf{m}}(x)e^{(x|e)}. \]
By the $L^{\C}$-invariance of $D^{(k)}(\lambda)$, for $y\in\Omega$, 
\begin{align*}
&D^{(k)}(\lambda)e^{(x|y)}=D^{(k)}(\lambda)e^{(P(y^{\frac{1}{2}})x|e)}\\
=&\sum_{\mathbf{m}\in\Z_{++}^r,\ |\mathbf{m}|\le rk}
(-1)^{|\mathbf{m}|}(-k)_{\mathbf{m}}(\lambda+\mathbf{m})_{k-\mathbf{m}}
\frac{d_{\mathbf{m}}}{\left(\frac{n}{r}\right)_{\mathbf{m}}}\Phi_{\mathbf{m}}(P(y^{\frac{1}{2}})x)
e^{(P(y^{\frac{1}{2}})x|e)}\\
=&\sum_{\mathbf{m}\in\Z_{++}^r,\ |\mathbf{m}|\le rk}
(-1)^{|\mathbf{m}|}(-k)_{\mathbf{m}}(\lambda+\mathbf{m})_{k-\mathbf{m}}K^{\mathbf{m}}(x,y)e^{(x|y)}. 
\end{align*}
This holds for any $x,y\in V^{\C}$ and $\lambda\in\C$ by analytic continuation. 
The second equality follows from 
\[ (\lambda+\mathbf{m})_{k-\mathbf{m}}=\frac{(\lambda)_k}{(\lambda)_{\mathbf{m}}}. \qedhere \]
\end{proof}

Using these differential operators, we can calculate $\langle f,g\rangle_{\lambda}$ for $\lambda\in\C$: 
for $\Re\lambda+k>\frac{2n}{r}-1$ and 
$f,g\in\bigoplus_{\mathbf{m}\in\Z_{++}^r,\ m_{\rank\lambda+1}=0}\mathcal{P}_{\mathbf{m}}(V^{\C})$, 
\begin{equation}\label{inner2}
\langle f,g\rangle_{\lambda}=\begin{cases}
\displaystyle \frac{c_{\lambda+k}}{(\lambda)_k}\int_D(D^{(k)}(\lambda)f)(w)\overline{g(w)}
h(w,w)^{\lambda+k-\frac{2n}{r}}dw &(\rank\lambda=r)\\
\displaystyle \lim_{\mu\to\lambda}\frac{c_{\mu+k}}{(\mu)_k}\int_D(D^{(k)}(\mu)f)(w)\overline{g(w)}
h(w,w)^{\mu+k-\frac{2n}{r}}dw &(\rank\lambda<r) \end{cases}
\end{equation}
where $c_{\lambda}=\frac{1}{\pi^n}\frac{\Gamma_{\Omega}(\lambda)}{\Gamma_{\Omega}\left(\lambda-\frac{n}{r}\right)}$ 
(see \cite[Proposition XIV.2.2, Proposition XIV.2.5]{FK}). 
We can prove easily that this equality holds not only for polynomials, 
but also for holomorphic functions $f,g\in\mathcal{O}(D)$ with $D^{(k)}(\lambda)f$ and $g$ bounded on $\overline{D}$.

\section{Proof for main theorem}\label{main}
For $\lambda\in\C$ with $\rank\lambda=r$, the I and J-Bessel functions are defined by 
\begin{align*}
\mathcal{I}_{\lambda}(x):=&\sum_{\mathbf{m}\in \Z_{++}^r}
\frac{d_{\mathbf{m}}}{\left(\frac{n}{r}\right)_{\mathbf{m}}}\frac{1}{(\lambda)_{\mathbf{m}}}
\Phi_{\mathbf{m}}(x), \\
\mathcal{J}_{\lambda}(x):=&\sum_{\mathbf{m}\in \Z_{++}^r}
\frac{d_{\mathbf{m}}}{\left(\frac{n}{r}\right)_{\mathbf{m}}}\frac{(-1)^{|\mathbf{m}|}}{(\lambda)_{\mathbf{m}}}
\Phi_{\mathbf{m}}(x)=\mathcal{I}_{\lambda}(-x). 
\end{align*}
If $\rank\lambda<r$, then $(\lambda)_{\mathbf{m}}=0$ for some $\mathbf{m}$, so we cannot define these functions 
on entire $V^{\C}$. However, if $x\in\overline{\mathcal{X}_l}$, $\Phi_{\mathbf{m}}(x)=0$ for $m_{l+1}\ne 0$, 
and therefore for any $\lambda\in\C$ we can define I and J-Bessel functions for 
$x\in\overline{\mathcal{X}_{\rank\lambda}}$ (see (\ref{subvar}) and (\ref{ranklambda})) by 
\begin{align*}
\mathcal{I}_{\lambda}(x):=&\sum_{\mathbf{m}\in \Z_{++}^r,\ m_{\rank\lambda+1}=0}
\frac{d_{\mathbf{m}}}{\left(\frac{n}{r}\right)_{\mathbf{m}}}\frac{1}{(\lambda)_{\mathbf{m}}}
\Phi_{\mathbf{m}}(x), \\
\mathcal{J}_{\lambda}(x):=&\sum_{\mathbf{m}\in \Z_{++}^r,\ m_{\rank\lambda+1}=0}
\frac{d_{\mathbf{m}}}{\left(\frac{n}{r}\right)_{\mathbf{m}}}\frac{(-1)^{|\mathbf{m}|}}{(\lambda)_{\mathbf{m}}}
\Phi_{\mathbf{m}}(x)=\mathcal{I}_{\lambda}(-x). 
\end{align*}
Now we are ready to state the main theorem. 

\begin{theorem}\label{integral}
For $\lambda\in\C$, $x\in\overline{\mathcal{X}_{\rank\lambda}}$, take $k\in\Z_{\ge 0}$ such that 
$\Re\lambda+k>\frac{2n}{r}-1$. Then we have the integral expressions 
\begin{gather*}
\mathcal{I}_{\lambda}\left(x^2\right)
=c_{\lambda+k}\int_D{}_1F_1(-k,\lambda;-x,w)e^{2(x|\Re w)}h(w,w)^{\lambda+k-\frac{2n}{r}}dw, \\
\mathcal{J}_{\lambda}\left(x^2\right)
=c_{\lambda+k}\int_D{}_1F_1(-k,\lambda;-ix,w)e^{2i(x|\Re w)}h(w,w)^{\lambda+k-\frac{2n}{r}}dw. 
\end{gather*}
where
\[ c_{\lambda}=\frac{1}{\pi^n}\frac{\Gamma_{\Omega}(\lambda)}{\Gamma_{\Omega}\left(\lambda-\frac{n}{r}\right)},\quad
{}_1F_1(-k,\lambda;x,w)=\sum_{\shortstack{\scriptsize{$\mathbf{m}\in\Z_{++}^r$, $|\mathbf{m}|\le rk$,}\\ 
\scriptsize{$m_{\rank\lambda+1}=0$}}}
\frac{(-k)_{\mathbf{m}}}{(\lambda)_{\mathbf{m}}}K^{\mathbf{m}}(x,w). \]
\end{theorem}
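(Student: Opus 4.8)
The plan is to reduce the integral formula to a computation of the weighted Bergman inner product using the reproducing kernel expansion. First I would recall that the Bessel function, from its series expansion and the identity $K^{\mathbf{m}}_e(z) = \frac{d_{\mathbf{m}}}{(n/r)_{\mathbf{m}}}\Phi_{\mathbf{m}}(z)$ together with $K^{\mathbf{m}}(x,\bar x) = K^{\mathbf{m}}(x^2, e)$, can be rewritten as
\[ \mathcal{I}_{\lambda}(x^2) = \sum_{\mathbf{m}\in\Z_{++}^r,\ m_{\rank\lambda+1}=0} \frac{1}{(\lambda)_{\mathbf{m}}} K^{\mathbf{m}}(x,\bar x). \]
The key observation is then that the right-hand side is exactly $\langle e^{(x|\cdot)}, e^{(x|\cdot)}\rangle_{\lambda}$ in the sense of the meromorphically continued inner product $(\ref{inner1})$: indeed $e^{(x|w)} = \sum_{\mathbf{m}} K^{\mathbf{m}}(x,w)$ is the Peirce/Fischer decomposition, each $K^{\mathbf{m}}(x,\cdot)\in\mathcal{P}_{\mathbf{m}}(V^{\C})$, and $\langle K^{\mathbf{m}}(x,\cdot), K^{\mathbf{m}}(x,\cdot)\rangle_F = K^{\mathbf{m}}(x,\bar x)$ by the reproducing property (using that $K^{\mathbf{m}}$ has real coefficients for $x\in V$, then analytically continuing). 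So the whole identity becomes
\[ \mathcal{I}_{\lambda}(x^2) = \Bigl\langle e^{(x|\cdot)}, e^{(x|\cdot)}\Bigr\rangle_{\lambda}, \]
where on the singular locus $x\in\overline{\mathcal{X}_{\rank\lambda}}$ the term $K^{\mathbf{m}}(x,\bar x)$ vanishes whenever $m_{\rank\lambda+1}\ne 0$, so the restriction of the sesquilinear form to $\bigoplus_{m_{\rank\lambda+1}=0}\mathcal{P}_{\mathbf{m}}(V^{\C})$ is exactly what is needed and the pairing is well-defined there.

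Next I would invoke the integral representation $(\ref{inner2})$ of $\langle\cdot,\cdot\rangle_{\lambda}$ via the invariant differential operators $D^{(k)}(\lambda)$. Applying it with $f = g = e^{(x|\cdot)}$ and using Proposition \ref{invdiffop}, which gives $D^{(k)}(\lambda)e^{(x|w)} = (\lambda)_k\,{}_1F_1(-k,\lambda;-x,w)e^{(x|w)}$ in the regular case (and the appropriate limiting version with the polynomial ${}_1F_1$ restricted to $m_{\rank\lambda+1}=0$ in the singular case, since the coefficients $(-k)_{\mathbf{m}}/(\lambda)_{\mathbf{m}}$ stay finite precisely on that index set), we get
\[ \mathcal{I}_{\lambda}(x^2) = \frac{c_{\lambda+k}}{(\lambda)_k}\int_D (\lambda)_k\,{}_1F_1(-k,\lambda;-x,w)\,e^{(x|w)}\,\overline{e^{(x|w)}}\,h(w,w)^{\lambda+k-\frac{2n}{r}}\,dw. \]
Since $x\in V$ (more precisely $x\in\overline{\mathcal{X}_{\rank\lambda}}\subset V^{\C}$, but on the real points) $\overline{e^{(x|w)}} = e^{(x|\bar w)} = e^{\overline{(x|w)}}$, so $e^{(x|w)}\overline{e^{(x|w)}} = e^{2\Re(x|w)} = e^{2(x|\Re w)}$, the $(\lambda)_k$ cancels, and we land on the stated formula. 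The $\mathcal{J}_{\lambda}$ case follows from $\mathcal{J}_{\lambda}(x^2) = \mathcal{I}_{\lambda}(-x^2) = \mathcal{I}_{\lambda}((ix)^2)$ and substituting $x\mapsto ix$ in the $\mathcal{I}$ formula, noting ${}_1F_1(-k,\lambda;-ix,w)$ and $e^{2(ix|\Re w)} = e^{2i(x|\Re w)}$.

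The main obstacle I anticipate is the careful justification of the singular case $\rank\lambda < r$: one must check that the limit in $(\ref{inner2})$ exists and agrees termwise with the Bessel series restricted to $\{m_{\rank\lambda+1}=0\}$, i.e. that the potentially divergent terms (those with $(\lambda)_{\mathbf{m}}=0$) genuinely drop out because $K^{\mathbf{m}}(x,\bar x)=0$ on $\overline{\mathcal{X}_{\rank\lambda}}$, and that ${}_1F_1(-k,\lambda;-x,w)$ as defined with the sum over $m_{\rank\lambda+1}=0$ is the correct limiting polynomial. A secondary technical point is the convergence of the integrals and the interchange of summation and integration: one needs $\Re\lambda+k > \frac{2n}{r}-1$ for $h(w,w)^{\lambda+k-\frac{2n}{r}}$ to be integrable on $D$, and since ${}_1F_1$ is a polynomial and $e^{2(x|\Re w)}$ is bounded on the bounded domain $D$ for fixed $x$, this is routine — but I would state it to make the term-by-term argument rigorous, and I would also note that the extension of $(\ref{inner2})$ from polynomials to holomorphic functions on $D$ bounded on $\overline D$ (remarked after $(\ref{inner2})$) is exactly what licenses plugging in $e^{(x|\cdot)}$.
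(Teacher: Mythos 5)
Your proposal is correct and follows essentially the same route as the paper: both compute the pairing of the exponential function with itself under $\langle\cdot,\cdot\rangle_{\lambda}$ in two ways, once via the Faraut--Kor\'anyi expansion (\ref{inner1}) to recover the Bessel series $\sum_{\mathbf{m}}\frac{1}{(\lambda)_{\mathbf{m}}}K^{\mathbf{m}}(x,\bar{x})=\mathcal{I}_{\lambda}(x^2)$, and once via (\ref{inner2}) together with Proposition \ref{invdiffop} to produce the integral over $D$, with the substitution $x\mapsto ix$ for $\mathcal{J}_{\lambda}$. The technical caveats you flag (the limit $\mu\to\lambda$ in the singular case and the extension of (\ref{inner2}) beyond polynomials) are exactly the points the paper relies on; only the notation $e^{(x|\cdot)}$ should be read as the holomorphic function $w\mapsto e^{(w|\bar{x})}$ to match the sesquilinear conventions.
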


When $\rank\lambda=r$, the definition of ${}_1F_1$ clearly coincides with the one in (\ref{1F1}). 

\begin{proof}
We calculate $\left\langle e^{(\cdot|\bar{x})}, e^{(\cdot|x)}\right\rangle_{\lambda}$ in two ways. By (\ref{inner1}), 
\begin{align*}
\left\langle e^{(\cdot|\bar{x})}, e^{(\cdot|x)}\right\rangle_{\lambda}
=&\left\langle\sum_{\mathbf{m}\in\Z^r_{++}}K^{\mathbf{m}}_{\bar{x}}, 
\sum_{\mathbf{n}\in\Z^r_{++}}K^{\mathbf{n}}_x\right\rangle_{\lambda}
=\sum_{\mathbf{m}\in\Z^r_{++}}\frac{1}{(\lambda)_\mathbf{m}}
\left\langle K^{\mathbf{m}}_{\bar{x}}, K^{\mathbf{m}}_x\right\rangle_F\\
=&\sum_{\mathbf{m}\in\Z^r_{++}}\frac{1}{(\lambda)_\mathbf{m}}K^{\mathbf{m}}(x,\bar{x})
=\sum_{\mathbf{m}\in\Z^r_{++}}\frac{1}{(\lambda)_\mathbf{m}}K^{\mathbf{m}}(x^2,e)\\
=&\sum_{\mathbf{m}\in\Z^r_{++}}\frac{1}{(\lambda)_\mathbf{m}}
\frac{d_\mathbf{m}}{\left(\frac{n}{r}\right)_\mathbf{m}}\Phi_{\mathbf{m}}(x^2)
=\mathcal{I}(x^2). 
\end{align*}
On the other hand, by (\ref{inner2}) and Proposition \ref{invdiffop}, 
\begin{align*}
\left\langle e^{(\cdot|\bar{x})}, e^{(\cdot|x)}\right\rangle_{\lambda}
=&\lim_{\mu\to\lambda}\frac{c_{\mu+k}}{(\mu)_k}\int_D\left(D^{(k)}(\mu)e^{(w|\bar{x})}\right)\overline{e^{(w|x)}}
h(w,w)^{\mu+k-\frac{2n}{r}}dw\\
=&\lim_{\mu\to\lambda}c_{\mu+k}\int_D{}_1F_1(-k,\mu;-x,w)e^{(w|\bar{x})}
\overline{e^{(w|x)}}h(w,w)^{\mu+k-\frac{2n}{r}}dw\\
=&c_{\lambda+k}\int_D{}_1F_1(-k,\lambda;-x,w)e^{2(x|\Re w)}h(w,w)^{\lambda+k-\frac{2n}{r}}dw. 
\end{align*}
The formula for $\mathcal{J}_{\lambda}\left(x^2\right)$ follows by replacing $x$ by $ix$. 
\end{proof}

From this theorem we can easily deduce the following corollary.  

\begin{corollary}\label{upperestimate}
For $\lambda\in\C$, $x\in\overline{\mathcal{X}_{\rank\lambda}}$, if $\Re\lambda+k>\frac{2n}{r}-1$ 
for some $k\in\Z_{\ge 0}$, then there exists a positive constant $C_{\lambda,k}>0$ such that 
\[ |\mathcal{I}_{\lambda}(x^2)|\le C_{\lambda,k}\left(1+|x|_1^{rk}\right)e^{2|\Re x|_1}, \qquad
|\mathcal{J}_{\lambda}(x^2)|\le C_{\lambda,k}\left(1+|x|_1^{rk}\right)e^{2|\Im x|_1} \]
where $|x|_1$ is the norm defined in Definition \ref{pnormdef}. 
\end{corollary}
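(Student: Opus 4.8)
The plan is to estimate directly the integral expression for $\mathcal{I}_\lambda(x^2)$ provided by Theorem \ref{integral}, using that the domain $D$ is bounded. Fix $k$ with $\Re\lambda+k>\frac{2n}{r}-1$, so that the formula
\[ \mathcal{I}_{\lambda}\left(x^2\right)
=c_{\lambda+k}\int_D{}_1F_1(-k,\lambda;-x,w)e^{2(x|\Re w)}h(w,w)^{\lambda+k-\frac{2n}{r}}dw \]
holds. The integrand is a product of three factors, and I would bound each in turn. First, since $D=\{w\in V^\C:|w|_\infty<1\}$ is bounded, we have $|w|_1\le r|w|_\infty< r$ for $w\in D$; hence by Proposition \ref{pnorm}(1), $|(x|\Re w)|\le|x|_1|\Re w|_\infty\le|x|_1$ (using that $\Re w\in V$ lies in $D\cap V$, so $|\Re w|_\infty<1$), giving $|e^{2(x|\Re w)}|\le e^{2|\Re x|_1}$ — wait, more carefully: $(x|\Re w)$ with $x\in V^\C$, $\Re w\in V$, so $\Re(x|\Re w)=(\Re x|\Re w)$ and $|e^{2(x|\Re w)}|=e^{2(\Re x|\Re w)}\le e^{2|\Re x|_1|\Re w|_\infty}\le e^{2|\Re x|_1}$.

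Second, the volume $\int_D h(w,w)^{\Re\lambda+k-\frac{2n}{r}}\,dw$ is finite precisely because $\Re\lambda+k>\frac{2n}{r}-1$ (this is the standard convergence condition for the weighted Bergman integral, used implicitly already in the definition of $\langle\cdot,\cdot\rangle_\lambda$), and it depends only on $\Re\lambda+k$, not on $x$; call it $M_{\lambda,k}$. Third, I must bound the polynomial factor ${}_1F_1(-k,\lambda;-x,w)=\sum_{\mathbf{m}}\frac{(-k)_{\mathbf{m}}}{(\lambda)_{\mathbf{m}}}K^{\mathbf{m}}(-x,w)$, a finite sum over $\mathbf{m}\in\Z_{++}^r$ with $|\mathbf{m}|\le rk$ and $m_{\rank\lambda+1}=0$. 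Each $K^{\mathbf{m}}(-x,w)$ is a polynomial, homogeneous of degree $|\mathbf{m}|$ in $x$ and in $w$; for $w\in D$ one has $|w|_\infty<1$, so by $K$-invariance of $K^{\mathbf{m}}$ and continuity on the compact set $\overline D$, $|K^{\mathbf{m}}(-x,w)|\le C'_{\mathbf{m}}\,\|x\|^{|\mathbf{m}|}$ for any fixed norm $\|\cdot\|$ on $V^\C$ (equivalently $|x|_1^{|\mathbf{m}|}$ up to constants, by Proposition \ref{pnorm}(3)). Summing the finitely many terms and absorbing coefficients, $|{}_1F_1(-k,\lambda;-x,w)|\le C''_{\lambda,k}\sum_{j=0}^{rk}|x|_1^{j}\le C'''_{\lambda,k}(1+|x|_1^{rk})$ uniformly for $w\in D$. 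Combining the three bounds with $|c_{\lambda+k}|$ yields $|\mathcal{I}_\lambda(x^2)|\le C_{\lambda,k}(1+|x|_1^{rk})e^{2|\Re x|_1}$.

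For $\mathcal{J}_\lambda$ the identity $\mathcal{J}_\lambda(x^2)=\mathcal{I}_\lambda(-x^2)$ together with the substitution $x\mapsto ix$ (note $(ix)^2=-x^2$) shows $\mathcal{J}_\lambda(x^2)=\mathcal{I}_\lambda((ix)^2)$, so the already-proven bound applied to $ix$ gives $|\mathcal{J}_\lambda(x^2)|\le C_{\lambda,k}(1+|ix|_1^{rk})e^{2|\Re(ix)|_1}=C_{\lambda,k}(1+|x|_1^{rk})e^{2|\Im x|_1}$, using $|ix|_1=|x|_1$ and $\Re(ix)=-\Im x$. Alternatively one argues directly from the second formula in Theorem \ref{integral} in exactly the same way.

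I do not expect any serious obstacle here; the only point requiring a little care is the uniform polynomial bound on ${}_1F_1(-k,\lambda;-x,w)$ in the $w$-variable over $D$ — one wants the constant to be genuinely independent of $w$, which follows by extracting the $x$-degree via homogeneity and then taking a supremum of the (continuous) coefficient functions over the compact closure $\overline D$. A secondary subtlety is the degenerate case $\rank\lambda<r$, where $\mathcal{I}_\lambda(x^2)$ is only defined for $x^2\in\overline{\mathcal{X}_{\rank\lambda}}$ and the integral formula is understood as the limit $\lim_{\mu\to\lambda}$ from (\ref{inner2}); but since all the estimates above are uniform for $\mu$ near $\lambda$ (the volume $M_{\mu,k}$, the constants $C'''_{\mu,k}$, and $|c_{\mu+k}|$ all vary continuously, and $(\lambda)_{\mathbf{m}}\ne0$ for the surviving $\mathbf{m}$ with $m_{\rank\lambda+1}=0$), the bound passes to the limit without change.
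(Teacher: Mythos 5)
Your proposal is correct and follows essentially the same route as the paper's own proof: bound $e^{2(\Re x|\Re w)}\le e^{2|\Re x|_1}$ via Proposition \ref{pnorm}, bound the polynomial factor ${}_1F_1(-k,\lambda;-x,w)$ uniformly in $w\in D$ by $C(1+|x|_1^{rk})$, use finiteness of $\int_D h(w,w)^{\Re\lambda+k-\frac{2n}{r}}dw$, and obtain the $\mathcal{J}_\lambda$ bound by the substitution $x\mapsto ix$. Your extra remarks on the degenerate case $\rank\lambda<r$ are harmless but unnecessary, since Theorem \ref{integral} already states the integral formula with the limit evaluated.
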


\begin{proof}
By Proposition \ref{pnorm}, for $w\in D$, $x\in V^{\C}$, 
\[ |(\Re x|\Re w)|\le|\Re x|_1|\Re w|_{\infty}\le|\Re x|_1\frac{|w|_{\infty}+|\bar{w}|_{\infty}}{2}\le|\Re x|_1. \]
Also, since ${}_1F_1(-k,\lambda;-x,w)$ is a polynomial of degree $rk$ with respect to both $x$ and $w$, 
\[ |{}_1F_1(-k,\lambda;-x,w)|\le C_{\lambda,k}'\left(1+|x|_1^{rk}\right)\left(1+|w|_{\infty}^{rk}\right)
\le 2C_{\lambda,k}'\left(1+|x|_1^{rk}\right). \]
Therefore, by Theorem \ref{integral}, 
\begin{align*}
|\mathcal{I}_{\lambda}(x^2)|
&\le |c_{\lambda+k}|\int_D|{}_1F_1(-k,\lambda;-x,w)|e^{2(\Re x|\Re w)}h(w,w)^{\Re\lambda+k-\frac{2n}{r}}dw\\
&\le 2|c_{\lambda+k}|C_{\lambda,k}'\left(1+|x|_1^{rk}\right)e^{2|\Re x|_1}\int_Dh(w,w)^{\Re \lambda+k-\frac{2n}{r}}dw\\
&=C_{\lambda,k}\left(1+|x|_1^{rk}\right)e^{2|\Re x|_1}. 
\end{align*}
The proof for $\mathcal{J}_{\lambda}\left(x^2\right)$ is similar. 
\end{proof}

\begin{remark}\label{compare}
In \cite[Lemma 3.1]{M} M\"{o}llers gave another estimate of $\mathcal{J}_{\lambda}(x)$: 
\[ \left|\mathcal{J}_{\lambda}\left(x^2\right)\right|\le C\left(1+|x|_2^2\right)^{\frac{r(2n-1)}{4}}e^{2r|x|_2}
\qquad \text{for any}\; \lambda\in\mathcal{W},\ x\in\overline{\mathcal{X}_{\rank\lambda}}\subset V^\C. \]
However, our estimate is sharper because our leading term is given by $e^{2|\Im x|_1}$. 
Especially in our estimate $\mathcal{J}_{\lambda}(x)$ is uniformly bounded on $V$ if $\Re \lambda$ is sufficiently large. 
This difference comes from that of methods of proofs: in \cite{M} the Taylor expansion was used, 
while in this paper we use the integral formula. However, in general Taylor series is not strong enough 
for $L^\infty$ estimates. For example, the bound of cosine function is calculated as follows:
\[ |\cos x|=\left|\sum_{m=0}^{\infty}\frac{(-1)^m}{(2m)!}x^{2m}\right|
\le\sum_{m=0}^{\infty}\frac{1}{(2m)!}|x|^{2m}\le\sum_{m=0}^{\infty}\frac{1}{m!}|x|^m=e^{|x|}. \]
However, it is well-known that cosine function is bounded unformly on $\R$. So this bound is not sharp. 
\end{remark}

\section{Applications}
For $\lambda>\frac{n}{r}-1$, $t\in\C\setminus\pi i\Z$, $\Re t\ge 0$, we define a integral operator on $\Omega$: 
for a measurable function $\varphi:\Omega\to\C$, we define 
\[ \tau_{\lambda}(t)\varphi(x):=\frac{1}{\Gamma_{\Omega}(\lambda)}
\int_{\Omega}\varphi(y)\frac{e^{-\coth t(\tr x+\tr y)}}{\sinh^{r\lambda}t}\mathcal{I}_{\lambda}
\left(\frac{1}{\sinh^2t}P(x^{\frac{1}{2}})y\right)\Delta(y)^{\lambda-\frac{n}{r}}dy. \]
Since $\mathcal{I}_{\lambda}$ is $K$-invariant, by \cite[Lemma XIV.1.2]{FK} we can replace 
$P(x^{\frac{1}{2}})y$ by $P(y^{\frac{1}{2}})x$. 

\begin{remark}
For $\lambda>\frac{2n}{r}-1$, the Laplace transform 
\[ \mathcal{L}_{\lambda}:L^2(\Omega,\Delta(x)^{\lambda-\frac{n}{r}}dx)\longrightarrow
L^2(V+\sqrt{-1}\Omega,\Delta(\Im z)^{\lambda-\frac{2n}{r}}dz)\cap\mathcal{O}(V+\sqrt{-1}\Omega) \]
is defined by
\[ \mathcal{L}_{\lambda}\varphi(z):=\frac{2^n}{\Gamma_{\Omega}(\lambda)}\int_{\Omega}
e^{i(z|x)}\varphi(x)\Delta(2x)^{\lambda-\frac{n}{r}}dx. \]
Then we can prove by the similar method to \cite[Theorem XV.4.1]{FK} that
\begin{align*}
\mathcal{L}_{\lambda}\tau_{\lambda}(t)\mathcal{L}_{\lambda}^{-1}F(z)
=&\Delta(-\sin(it)z+\cos(it)e)^{-\lambda} \\
&\qquad \times F\left((\cos(it)z+\sin(it)e)(-\sin(it)z+\cos(it)e)^{-1}\right). 
\end{align*}
If $t$ is purely imaginary, then this coincides with the restriction of 
the holomorphic discrete series representation of 
the simple Hermitian Lie group $\mathrm{Bihol}(V+\sqrt{-1}\Omega)$, 
to the center of the maximal compact subgroup $\mathrm{Stab}(ie)$. 
That is, $\tau_{\lambda}$ can be regarded as the natural complexification of the action of 
$Z(\mathrm{Stab}(ie))\subset\mathrm{Bihol}(V+\sqrt{-1}\Omega)$. 
Especially, $\tau_{\lambda}(s)\tau_{\lambda}(t)=\tau_{\lambda}(s+t)$ holds for $\lambda>\frac{2n}{r}-1$. 
\end{remark}
\begin{remark}\label{Euclidrem}
Let $E$ be an Euclidean vector space of dimension $N$ with inner product $(\cdot|\cdot)_E$. 
Then the Hermite semigroup on $L^2(E)$ is given by 
\begin{equation}\label{Hermite}
\tilde{\tau}(t)f(\xi):=\frac{1}{(2\pi\sinh t)^{\frac{N}{2}}}
\int_{E}f(\eta)\exp\left(-\frac{1}{2}\coth t(|\xi|_E^2+|\eta|_E^2)+\frac{1}{\sinh t}(\xi|\eta)_E\right)d\eta
\end{equation}
for $f\in L^2(E)$, $t\in\C\setminus\pi i\Z$, $\Re t\ge 0$ (see, $e.g.$, \cite[Section 5.2]{F}). 
From now on we assume there exists an self-adjoint representation $\phi:V\to\mathrm{End}(E)$. 
We also assume $N>r(r-1)d$. Let $Q:E\to V$ be the quadratic map defined by
\[ (\phi(x)\xi|\xi)_E=(x|Q(\xi))_V \qquad \text{for any}\; x\in V,\ \xi\in E. \]
Let $\Sigma:=Q^{-1}(e)\subset E$ be the Stiefel manifold. Then we have 
\begin{equation}\label{repBessel}
\int_{\Sigma}e^{-i(\xi|\sigma)}d\sigma=\mathcal{J}_{\frac{N}{2r}}\left(Q\left(\frac{\xi}{2}\right)\right) 
\end{equation}
(see \cite[Proposition XVI.2.3]{FK}). 
We extend $Q$ to $Q:E^{\C}\to V^{\C}$ bilinearly. Then since $\mathcal{J}_{\lambda}(x)=\mathcal{I}_{\lambda}(-x)$ 
we have 
\[ \int_{\Sigma}e^{(\xi|\sigma)}d\sigma=\mathcal{I}_{\frac{N}{2r}}\left(Q\left(\frac{\xi}{2}\right)\right). \]
If $f\in L^2(E)$ is written as $f(\xi)=F\left(\frac{1}{2}Q(\xi)\right)$ with a function $F$ on $V$, 
then (\ref{Hermite}) can be rewritten as 
\begin{align*}
&\tilde{\tau}(t)f(\xi)=\frac{1}{(2\pi\sinh t)^{\frac{N}{2}}}
\int_{E}F\left(\frac{1}{2}Q(\eta)\right)\exp\left(-\frac{1}{2}\coth t(|\xi|_E^2+|\eta|_E^2)
+\frac{1}{\sinh t}(\xi|\eta)_E\right)d\eta\\
=&\frac{1}{(\pi\sinh t)^{\frac{N}{2}}}
\int_{E}F(Q(\eta))\exp\left(-\coth t\left(\frac{1}{2}|\xi|_E^2+|\eta|_E^2\right)
+\frac{\sqrt{2}}{\sinh t}(\xi|\eta)_E\right)d\eta\\
=&\frac{1}{\Gamma_{\Omega}(\frac{N}{2r})\sinh^{\frac{N}{2}}t}\int_{\Omega}\int_{\Sigma}
F(Q(\phi(y^{\frac{1}{2}})\sigma))
\exp\left(-\coth t\left(\frac{1}{2}|\xi|_E^2+|\phi(y^{\frac{1}{2}})\sigma|_E^2\right)\right)\\
&\qquad \times\exp\left(\frac{\sqrt{2}}{\sinh t}(\xi|\phi(y^{\frac{1}{2}})\sigma)_E\right)
\Delta(y)^{\frac{N}{2r}-\frac{n}{r}}d\sigma dy\\
=&\frac{1}{\Gamma_{\Omega}(\frac{N}{2r})}\int_{\Omega}\int_{\Sigma}
F(y)\frac{\exp\left(-\coth t\left(\frac{1}{2}|\xi|_E^2+\tr y\right)\right)}{\sinh^{\frac{N}{2}}t}
\exp\left(\frac{\sqrt{2}}{\sinh t}(\phi(y^{\frac{1}{2}})\xi|\sigma)_E\right)
\Delta(y)^{\frac{N}{2r}-\frac{n}{r}}d\sigma dy\\
=&\frac{1}{\Gamma_{\Omega}(\frac{N}{2r})}\int_{\Omega}
F(y)\frac{\exp\left(-\coth t\left(\frac{1}{2}|\xi|_E^2+\tr y\right)\right)}{\sinh^{\frac{N}{2}}t}
\mathcal{I}_{\frac{N}{2r}}\left(Q\left(\frac{1}{\sqrt{2}\sinh t}\phi(y^{\frac{1}{2}})\xi\right)\right)
\Delta(y)^{\frac{N}{2r}-\frac{n}{r}}dy\\
=&\frac{1}{\Gamma_{\Omega}(\frac{N}{2r})}\int_{\Omega}
F(y)\frac{\exp\left(-\coth t\left(\frac{1}{2}\tr Q(\xi)+\tr y\right)\right)}{\sinh^{\frac{N}{2}}t}
\mathcal{I}_{\frac{N}{2r}}\left(\frac{1}{2\sinh^2t}P(y^{\frac{1}{2}})Q(\xi)\right)
\Delta(y)^{\frac{N}{2r}-\frac{n}{r}}dy\\
=&\tau_{\frac{N}{2r}}(t)F\left(\frac{1}{2}Q(\xi)\right) 
\end{align*}
where we used \cite[Proposition XVI.2.1]{FK} at the 3rd equality and \cite[Lemma XVI.2.2.(ii)]{FK} at the 4th, 6th 
equalities. 
Therefore $\tau_{\frac{N}{2r}}(t)$ coincides with the action of the Hermite semigroup on radial functions on $E$. 
\end{remark}
\begin{remark}
For $x\in\overline{\mathcal{X}_1}$ (see (\ref{subvar})), 
$\mathcal{I}_{\lambda}(x)=\Gamma(\lambda)\tilde{I}_{\lambda-1}(2\sqrt{|x|_2})$ holds (see \cite[Example 3.3]{M}), 
and by analytic continuation the distribution 
$\frac{1}{\Gamma_{\Omega}(\lambda)}\Delta(x)^{\lambda-\frac{n}{r}}\mathbf{1}_{\Omega}dx$ 
at $\lambda=\frac{d}{2}$ gives the semi-invariant measure on $\overline{\mathcal{X}_1}\cap\overline{\Omega}$ 
(see \cite[Proposition VII.2.3]{FK}). Therefore for $V=\R^{1,n-1}$ the action $\tau_{\lambda}$ at 
$\lambda=\frac{d}{2}$ coincides with the action of the holomorphic semigroup on the minimal representation of 
$O(p,2)$ (see \cite[Theorem B]{KM1} or \cite[Theorem 5.1.1]{KM2}). 
\end{remark}
\begin{remark}
We set 
\[ H_{\lambda}\varphi(x):=i^{r\lambda}\tau_\lambda\left(\frac{\pi i}{2}\right)\varphi(x)
=\frac{1}{\Gamma_\Omega(\lambda)}\int_\Omega\varphi(y)\mathcal{J}\left(P(x^{\frac{1}{2}})y\right)
\Delta(y)^{\lambda-\frac{n}{r}}dy. \]
This is called the generalized Hankel transform (\cite[Section XV.4]{FK}). 
Similar to Remark \ref{Euclidrem}, this is regarded as a variant of the Fourier transform. 
Therefore it is expected that this Hankel transform has similar properties as the Fourier transform 
such as a Paley-Wiener type theorem, which determines the image of the compactly supported functions. 
This is done by, $e.g.$, \cite{A}, \cite[Remark 5.4]{K} for classical $V=\R$ case, but not for generalized case. 
In this paper we don't touch this topic in detail. 
\end{remark}

We set $K_{\lambda}(x,y;t):=e^{-\coth t(\tr x+\tr y)}\mathcal{I}_{\lambda}
\left(\sinh^{-2}tP(x^{\frac{1}{2}})y\right)$, the kernel function of $\tau_{\lambda}(t)$. Then 
we can deduce from Theorem \ref{upperestimate} that 

\begin{theorem}\label{kernelestimate}
Take $k\in\Z_{\ge 0}$ such that $\lambda+k>\frac{2n}{r}-1$. Then if $t=u+iv$, $u,v\in\R$, $u\ge 0$, 
\[ |K_{\lambda}(x,y;t)|\le C_{\lambda,t}\left(1+(\tr x\tr y)^\frac{rk}{2}\right)
\exp\left(-\frac{\sinh u}{\cosh u+|\cos v|}(\tr x+\tr y)\right). \]
\end{theorem}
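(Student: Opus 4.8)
The plan is to substitute the estimate of Corollary \ref{upperestimate} into the definition of $K_\lambda(x,y;t)$ and track the exponential factors carefully. First I would note that $\sinh^{-2}t\,P(x^{1/2})y\in\overline{\mathcal{X}_{\rank\lambda}}$, so that the estimate $|\mathcal{I}_\lambda(z^2)|\le C_{\lambda,k}(1+|z|_1^{rk})e^{2|\Re z|_1}$ applies with $z^2=\sinh^{-2}t\,P(x^{1/2})y$. The key algebraic input is a square root: writing $z=(\sinh t)^{-1}P(x^{1/2})y^{1/2}$ (interpreting $P(x^{1/2})y^{1/2}$ appropriately, or rather using that $P(x^{1/2})y$ and $P(x^{1/2}y^{1/2}x^{1/2})$-type elements share the relevant spectral data via \cite[Lemma XIV.1.2]{FK} or a direct Jordan-algebra identity), one gets $|z|_1=|\sinh t|^{-1}\bigl|P(x^{1/2})y\bigr|_?^{1/2}$-style bounds. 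The cleanest route is probably: for $x,y\in\Omega$, the element $P(x^{1/2})y\in\Omega$ has $\tr\bigl(P(x^{1/2})y\bigr)=(x\mid y)\le \tr x\,\tr y$ by Proposition \ref{pnorm}(1), and hence $|P(x^{1/2})y|_1=\tr(P(x^{1/2})y)\le\tr x\,\tr y$; taking the square root controls $|z|_1\le |\sinh t|^{-1}(\tr x\,\tr y)^{1/2}$, which gives the polynomial factor $1+(\tr x\,\tr y)^{rk/2}$ up to the $|\sinh t|^{-rk}$ constant absorbed into $C_{\lambda,t}$.

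The main work is the exponential factor. I would bound $|\Re z|_1\le |z|_1$ crudely is too lossy; instead write $2|\Re z|_1\le 2|z|_1$ won't give the stated bound. The correct approach: $|\mathcal{I}_\lambda(z^2)|\le C(1+\cdots)e^{2|\Re z|_1}$ and $|\Re z|_1\le |\Re((\sinh t)^{-1})|\cdot|P(x^{1/2})y^{1/2}\text{-data}|$. Actually the sharp handling is to bound $|\Re z|_1\le \bigl|\Re(\sinh t)^{-1}\bigr|\,(\tr x\,\tr y)^{1/2}$ together with the Cauchy--Schwarz-type inequality $(\tr x\,\tr y)^{1/2}\le \tfrac12(\alpha\tr x+\alpha^{-1}\tr y)$ for any $\alpha>0$, optimized so that the product $\exp(2|\Re z|_1 - \coth u\,(\tr x+\tr y)\cdot\text{(real part)})$ has a uniform sign. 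Here $\Re\coth t$ with $t=u+iv$ equals $\frac{\sinh u\cosh u}{\sinh^2 u+\sin^2 v}$ (after using $\cosh^2 u-\cos^2 v$ in the denominator via $\cosh^2u - \cos^2 v = \sinh^2 u + \sin^2 v$), and $|\sinh t|^{-1} = (\sinh^2 u+\sin^2 v)^{-1/2} = (\cosh^2 u-\cos^2 v)^{-1/2}$. The claimed rate $\frac{\sinh u}{\cosh u+|\cos v|}$ strongly suggests factoring $\cosh^2u-\cos^2v=(\cosh u-|\cos v|)(\cosh u+|\cos v|)$ and comparing $\Re\coth t$ against $\frac{2}{|\sinh t|}$ times the optimal $\alpha$: one checks the elementary inequality
\[
\Re(\coth t)\,(\tr x+\tr y) - \frac{2}{|\sinh t|}(\tr x\,\tr y)^{1/2}\ \ge\ \frac{\sinh u}{\cosh u+|\cos v|}\,(\tr x+\tr y),
\]
which reduces, after dividing by $(\tr x+\tr y)$ and using $2(\tr x\,\tr y)^{1/2}\le \tr x+\tr y$, to $\Re(\coth t)-|\sinh t|^{-1}\ge \frac{\sinh u}{\cosh u+|\cos v|}$, i.e. to $\frac{\cosh u}{\cosh u-|\cos v|}\cdot\frac{\sinh u}{\sqrt{\cosh^2 u - \cos^2 v}} - \frac{1}{\sqrt{\cosh^2 u-\cos^2 v}}\ge \frac{\sinh u}{\cosh u+|\cos v|}$ — a one-variable inequality in $\cosh u\ge 1$ and $|\cos v|\in[0,1]$ that I would verify by clearing denominators.

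Putting it together: for $x,y\in\Omega$,
\[
|K_\lambda(x,y;t)| \le C_{\lambda,k}\Bigl(1+\bigl(|\sinh t|^{-1}(\tr x\,\tr y)^{1/2}\bigr)^{rk}\Bigr)\exp\Bigl(2|\Re z|_1 - \Re(\coth t)(\tr x+\tr y)\Bigr),
\]
and the exponent is bounded above by $-\frac{\sinh u}{\cosh u+|\cos v|}(\tr x+\tr y)$ by the displayed inequality, while $|\sinh t|^{-rk}$ is absorbed into $C_{\lambda,t}$. The general case $x,y\in\overline\Omega$ follows by continuity, and the case of non-real arguments is not needed since $\tau_\lambda(t)$ acts on functions on $\Omega$. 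The main obstacle I anticipate is establishing the clean bound $|\Re z|_1\le |\Re((\sinh t)^{-1})|(\tr x\,\tr y)^{1/2}$ — getting the \emph{real part} of the square root under control, rather than its modulus — since $z$ depends on $t$ only through the scalar $(\sinh t)^{-1}$, so in fact $\Re z = \Re((\sinh t)^{-1})\cdot P(x^{1/2})y^{1/2}$-data is genuinely real-scalar times a positive element, making $|\Re z|_1 = |\Re((\sinh t)^{-1})|\cdot|(\text{pos. elt})|_1$ exactly, and then $|\Re(\sinh t)^{-1}| = \frac{|\sinh u\cos v|}{\cosh^2 u-\cos^2 v}\le \frac{\sinh u}{\cosh^2 u - \cos^2 v}$... wait, this needs care because it is $|\cos v|$ not $1$ in the numerator — which actually \emph{helps}, so the verification should go through, but the bookkeeping of which factor of $|\cos v|$ appears where is where I would be most careful.
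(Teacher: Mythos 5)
Your overall strategy --- substitute Corollary \ref{upperestimate} with $z=\sinh^{-1}t\,\bigl(P(x^{1/2})y\bigr)^{1/2}$, use that $\Re z$ is a real scalar times a positive element so that $|\Re z|_1=|\Re(\sinh t)^{-1}|\tr\sqrt{P(x^{1/2})y}$, and then combine the exponentials --- is exactly the paper's, but there are two genuine gaps in the execution. The first is the ``square root'' step. You need $\tr\sqrt{P(x^{1/2})y}\le\sqrt{\tr x\,\tr y}$, and your proposed derivation (bound $\tr(P(x^{1/2})y)=(x|y)\le\tr x\,\tr y$ and ``take the square root'') does not give it: $\tr\sqrt{a}$ is not $\sqrt{\tr a}$, and in fact $\tr\sqrt{a}\ge\sqrt{\tr a}$ for $a\in\overline{\Omega}$ (e.g.\ $\tr\sqrt{e}=r>\sqrt{r}=\sqrt{\tr e}$), so the inequality you would need goes the wrong way. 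The bound $\tr\sqrt{P(x^{1/2})y}\le\sqrt{\tr x\,\tr y}$ is a genuine Cauchy--Schwarz-type statement (for matrices it is $\|y^{1/2}x^{1/2}\|_{S_1}\le\|y^{1/2}\|_{S_2}\|x^{1/2}\|_{S_2}$); the paper isolates it as a separate lemma, proved by maximizing $k\mapsto\tr\sqrt{P(x^{1/2})ky}$ over $K_L$, showing the maximizer $y_0$ satisfies $[L(x),L(y_0)]=0$, passing to a common Jordan frame, and applying the scalar inequality $\sum_j\sqrt{t_js_j}\le\sqrt{(\sum_jt_j)(\sum_js_j)}$. This lemma is the missing idea, and both your polynomial factor and your exponential factor depend on it.

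The second problem is the displayed one-variable inequality you propose to ``verify by clearing denominators'': it is false. Having replaced $2|\Re(\sinh t)^{-1}|$ by the larger $2/|\sinh t|$, the reduction (take $\tr x=\tr y$) is to $\Re\coth t-|\sinh t|^{-1}\ge\frac{\sinh u}{\cosh u+|\cos v|}$, which is equivalent to $\sinh u\,|\cos v|\ge|\sinh t|$ and fails, e.g.\ at $v=\pi/2$. Your closing remark contains the correct fix: keep $|\Re(\sinh t)^{-1}|=\frac{\sinh u|\cos v|}{\cosh^2u-\cos^2v}$ and do not enlarge it to $|\sinh t|^{-1}$. Once you do, and use $2\tr\sqrt{P(x^{1/2})y}\le2\sqrt{\tr x\,\tr y}\le\tr x+\tr y$, there is no inequality left to check: $\Re\coth t-|\Re(\sinh t)^{-1}|=\frac{\sinh u\cosh u-\sinh u|\cos v|}{\cosh^2u-\cos^2v}=\frac{\sinh u}{\cosh u+|\cos v|}$ identically, which is the paper's computation. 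So the trigonometric bookkeeping is repairable along the lines you gesture at, but the trace-of-square-root lemma must be supplied by a real argument rather than by taking square roots of a trace inequality.
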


Especially, if $u=\Re t>0$ then the integral defining $\tau_{\lambda}(t)$ converges 
if $\varphi$ is of polynomial growth, and the resulting $\tau_{\lambda}(t)\varphi$ has exponential decay. 
Even if $u=\Re t=0$, if $\lambda>\frac{2n}{r}-1$ and $t\notin \pi i\Z$, the integral converges if 
$\varphi\in L^1(\Omega,\Delta(x)^{\lambda-\frac{n}{r}}dx)$, and the resulting $\tau_{\lambda}(t)\varphi$ is bounded. 
In order to prove this theorem, we prepare the following lemma. 

\begin{lemma}
\begin{enumerate}
\item For $x\in\Omega$ the directional derivative of $x\mapsto\sqrt{x}$ is 
\[ D_{u}\sqrt{x}=\frac{1}{2}L\left(\sqrt{x}\right)^{-1}u. \]
\item For $x,y\in V$ if $[L(x),L(y)]=0$, then there exists a Jordan frame $\{c_1,\ldots,c_r\}$ such that 
$x,y\in\R\mathchar`-\operatorname{span}\{c_1,\ldots,c_r\}$. 
\item For $x,y\in\Omega$, $\displaystyle \tr\sqrt{P(x^{\frac{1}{2}})y}\le\sqrt{\tr x\tr y}\le\frac{\tr x+\tr y}{2}$. 
\end{enumerate}
\end{lemma}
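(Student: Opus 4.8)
The plan is to prove the three parts of this lemma in sequence, each being a fairly standard fact about Euclidean Jordan algebras, and then to assemble part (3) from parts (1) and (2).

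\textbf{Part (1).} I would differentiate the identity $L(\sqrt{x})^2 - \tfrac{1}{2}L(x) = \tfrac{1}{2}P(\sqrt{x})$\dots actually the cleanest route is to start from $\sqrt{x}\cdot\sqrt{x} = x$ and differentiate in the direction $u$: writing $y = \sqrt{x}$ and $\dot{y} = D_u\sqrt{x}$, the product rule gives $2\,y\cdot\dot{y} = u$, i.e. $2L(\sqrt{x})\dot{y} = u$. Since $x \in \Omega$, $\sqrt{x} \in \Omega$ as well, and $L(\sqrt{x})$ is invertible (the eigenvalues of $\sqrt{x}$ are positive, hence so are those of $L(\sqrt{x})$ by the Peirce decomposition with respect to a Jordan frame diagonalizing $\sqrt{x}$). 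Solving gives $D_u\sqrt{x} = \tfrac{1}{2}L(\sqrt{x})^{-1}u$. One should also note $\sqrt{x}$ is smooth on $\Omega$, which follows e.g. from the inverse function theorem applied to $y \mapsto y^2$ at a point of $\Omega$, using again that $2L(y)$ is invertible there.

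\textbf{Part (2).} This is essentially the simultaneous-diagonalization statement. If $[L(x),L(y)] = 0$, then $x\Box y$ and $y\Box x$ in the Jordan algebra coincide (since $x \Box y = L(xy) + [L(x),L(y)]$ and the bracket term vanishes, while $xy = yx$ always), so I can invoke Lemma \ref{flat} applied to the real form $V$ (with the complex conjugation trivial on $V$): there is $k \in K$ with $x, y \in \R\text{-span}\{kc_1,\dots,kc_r\}$; and $\{kc_1,\dots,kc_r\}$ is again a Jordan frame since $K = \mathrm{Aut}_{\mathrm{JTS}}(V^\C)_0$ preserves the Jordan-frame conditions. Alternatively one can cite \cite[Lemma X.2.2]{FK} directly. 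I expect this to be routine; the only care needed is checking that Lemma \ref{flat}'s hypothesis $x\Box\bar y = y\Box\bar x$ matches $[L(x),L(y)]=0$ on the real form.

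\textbf{Part (3).} The second inequality $\sqrt{\tr x\,\tr y} \le \tfrac{1}{2}(\tr x + \tr y)$ is just AM--GM. For the first, set $z := P(x^{1/2})y \in \Omega$ and recall $\Delta(z) = \Delta(x)\Delta(y)$ and $\tr z = (z|e) = (P(x^{1/2})y \,|\, e) = (y \,|\, P(x^{1/2})e) = (y|x) = \tr(xy)$. So I must show $\tr\sqrt{z}\cdot\tr\sqrt{z}\ge$\dots no: I must show $(\tr\sqrt{z})^2 \le \tr x\,\tr y$. Now $\tr\sqrt{z} = \sum \sqrt{\zeta_j}$ where $\zeta_j > 0$ are the eigenvalues of $z$; by Cauchy--Schwarz $(\sum\sqrt{\zeta_j})^2 \le r\sum\zeta_j = r\,\tr z = r\,\tr(xy)$, which is not quite what I want. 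The sharp route instead: pick, using part (2) applied after reducing to the case $[L(x),L(y)]=0$ — which one cannot assume in general — so this needs more thought. The genuine argument: by a result such as \cite[Proposition III.4.3]{FK} or a direct Peirce computation, $\tr\sqrt{P(x^{1/2})y} \le \tr(x^{1/2}\cdot \text{something})$; concretely, use that $t\mapsto\tr\sqrt{t}$ is concave on $\Omega$ together with $P(x^{1/2})$ behaving well, or invoke that the map $y\mapsto \tr\sqrt{P(x^{1/2})y}$ attains its bound by diagonalizing. \textbf{The main obstacle is exactly this first inequality of part (3)}: the clean proof is to note $\tr\sqrt{P(x^{1/2})y}$ is, up to the identification with singular values, the $\ell^1$-type quantity $|P(x^{1/2})y^{1/2}|$-style expression and apply Proposition \ref{pnorm}(1): indeed $\tr\sqrt{z} = |z^{1/2}|_1$ is not directly a pairing, but one can write $(\tr\sqrt{z})^2$ and use the operator-norm/trace-norm duality $\tr\sqrt{z} = \max_{|w|_\infty \le 1, w\in\Omega}(z^{1/2}|w)$\dots I would ultimately reduce to the matrix inequality $\tr\sqrt{A^{1/2}BA^{1/2}} \le \sqrt{\tr A\,\tr B}$ for positive semidefinite $A,B$ (which is standard: it follows from $\tr\sqrt{A^{1/2}BA^{1/2}} = \tr|B^{1/2}A^{1/2}| \le \|A^{1/2}\|_2\|B^{1/2}\|_2 = \sqrt{\tr A}\sqrt{\tr B}$ by the trace-norm/Hilbert--Schmidt Hölder inequality, valid in any formally real Jordan algebra via the Peirce calculus), and port it to $V$ using the spectral decomposition and the identity $\Delta$- and $\tr$-compatibility of $P(x^{1/2})$. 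This last transfer — making ``$\tr|B^{1/2}A^{1/2}|$'' meaningful in a general Jordan algebra rather than just $\mathrm{Herm}(r,\cdot)$ — is where I expect the only real work to lie, and I would handle it by Proposition \ref{pnorm}(1) applied to $x' = P(x^{1/2})^{1/2}$-type elements after passing to the subalgebra generated by $x$ and $z$.
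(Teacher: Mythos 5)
Parts (1) and (2) of your proposal are fine and essentially coincide with the paper: (1) is the same differentiation of $\sqrt{x}\cdot\sqrt{x}=x$ using invertibility of $L(\sqrt{x})$ on $\Omega$, and (2) is exactly the paper's citation of \cite[Lemma X.2.2]{FK}. (Your alternative route through Lemma \ref{flat} needs one more word of care: for $k\in K=\mathrm{Aut}_{\mathrm{JTS}}(V^{\C})_0$ the elements $kc_1,\ldots,kc_r$ are a priori only a frame of tripotents of the triple system, not idempotents of the Jordan algebra, so they do not automatically form a Jordan frame; the direct citation is the safer choice.) The genuine gap is where you yourself locate it: the first inequality of part (3). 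Your reduction to $\tr\sqrt{A^{1/2}BA^{1/2}}=\tr|B^{1/2}A^{1/2}|\le\|A^{1/2}\|_2\|B^{1/2}\|_2$ requires forming the non-symmetric associative product $B^{1/2}A^{1/2}$ and taking its singular values, and no such object exists for $V=\R^{1,n-1}$ or $V=\mathrm{Herm}(3,\Oct)$. The ``transfer'' you defer does not go through as sketched either: although the unital subalgebra generated by two elements is special (Shirshov--Cohn), it is in general of smaller rank, and its intrinsic Jordan trace does not agree with the restriction of $\tr$ from $V$ (a primitive idempotent of the subalgebra can have $\tr>1$ in $V$), so an inequality proved inside that subalgebra does not yield the stated one. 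Your other attempts (Cauchy--Schwarz on the eigenvalues of $P(x^{\frac{1}{2}})y$, concavity of $\tr\sqrt{\cdot}$) also do not close it, as you note.

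The paper's actual argument is a variational one built precisely on parts (1) and (2), and it works uniformly for all five simple algebras without any associativity. Maximize $k\mapsto\tr\sqrt{P(x^{\frac{1}{2}})ky}$ over the compact group $K_L$, attained at $k_0$, and set $y_0:=k_0y$. The first-order condition, computed with part (1), reads $\bigl(z\,\big|\,Dy_0\bigr)=0$ for all $D\in\mathrm{Lie}(K_L)$, where $z:=P(x^{\frac{1}{2}})\sqrt{P(x^{\frac{1}{2}})y_0}^{-1}$; taking $D=[L(u),L(v)]$ and unwinding gives $[L(y_0),L(z)]=0$. Since $P(z)y_0=x$, part (2) then places $x$ and $y_0$ in the real span of a common Jordan frame, say $x=\sum_j t_jc_j$ and $y_0=\sum_j s_jc_j$ with $t_j,s_j>0$, whence $\tr\sqrt{P(x^{\frac{1}{2}})y_0}=\sum_j\sqrt{t_js_j}\le\bigl(\sum_j t_j\bigr)^{\frac{1}{2}}\bigl(\sum_j s_j\bigr)^{\frac{1}{2}}=\sqrt{\tr x\,\tr y}$ by ordinary Cauchy--Schwarz (note $\tr y_0=\tr y$), and finally $\tr\sqrt{P(x^{\frac{1}{2}})y}\le\tr\sqrt{P(x^{\frac{1}{2}})y_0}$ by maximality. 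You should redo part (3) along these lines; the second inequality is indeed just AM--GM, as you say.
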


\begin{proof}
(1) $u=D_ux=D_u\left(\sqrt{x}\right)^2=2\sqrt{x}D_u\sqrt{x}=2L\left(\sqrt{x}\right)D_u\sqrt{x}$ 
and then $D_u\sqrt{x}=\frac{1}{2}L\left(\sqrt{x}\right)^{-1}u$ follows. \\
(2) See \cite[Lemma X.2.2]{FK}. \\
(3) The second inequality is clear. For the first inequality, we take $k_0\in K$ such that 
$\tr\sqrt{P(x^{\frac{1}{2}})ky}$ ($k\in K_L$) attains its maximum at $k=k_0$. 
We put $k_0y=:y_0$. Then for any $D\in\mathfrak{k_l}=Lie(K_L)$, 
\begin{align*}
0=&\left.\frac{d}{dt}\right|_{t=0}\tr\sqrt{P(x^{\frac{1}{2}})e^{tD}y_0}
=\frac{1}{2}\tr\left(L\left(\sqrt{P(x^{\frac{1}{2}})y_0}\right)^{-1}P(x^{\frac{1}{2}})Dy_0\right) \\
=&\frac{1}{2}\left(\left.\sqrt{P(x^{\frac{1}{2}})y_0}^{-1}\right|P(x^{\frac{1}{2}})Dy_0\right)
=\frac{1}{2}\left(\left.P(x^{\frac{1}{2}})\sqrt{P(x^{\frac{1}{2}})y_0}^{-1}\right|Dy_0\right). 
\end{align*}
We put $P(x^{\frac{1}{2}})\sqrt{P(x^{\frac{1}{2}})y_0}^{-1}=:z$. If $D=[L(u),L(v)]$ ($u,v\in V$), then 
\begin{align*}
0=&(z|[L(u),L(v)]y_0)=(z|u(vy_0))-(z|v(uy_0))=(zu|vy_0)-(zv|uy_0) \\
=&(y_0(zu)|v)-(v|(uy_0)z)=([L(y_0),L(z)]u|v). 
\end{align*}
Since $(\cdot|\cdot)$ is non-degenerate, $[L(y_0),L(z)]=0$. Also, 
\begin{align*}
P(z)y_0&=P\left(P(x^{\frac{1}{2}})\sqrt{P(x^{\frac{1}{2}})y_0}^{-1}\right)y_0 \\
&=P(x^{\frac{1}{2}})P\left(\sqrt{P(x^{\frac{1}{2}})y_0}^{-1}\right)P(x^{\frac{1}{2}})y_0
=P(x^{\frac{1}{2}})e=x. 
\end{align*}
So especially $[L(x),L(y_0)]=0$. Let $x=\sum_{j=1}^rt_jc_j$, $y=\sum_{j=1}^rs_jd_j$ 
($t_j,s_j>0$, and $\{c_j\}_{j=1}^r,\{d_j\}_{j=1}^r$ are Jordan frames). Then, 
\begin{align*}
\tr\sqrt{P(x^{\frac{1}{2}})y}&\le\tr\sqrt{P(x^{\frac{1}{2}})y_0}
=\tr\sqrt{P\left(\sum_{j=1}^rt_j^{\frac{1}{2}}c_j\right)\sum_{j=1}^rs_jc_j}\\
&=\sum_{j=1}^r\sqrt{t_js_j}
\le\sqrt{\left(\sum_{j=1}^rt_j\right)\left(\sum_{j=1}^rs_j\right)}=\sqrt{\tr x\tr y}
\end{align*}
and the proof is completed. 
\end{proof}

Now we are ready to prove Theorem \ref{kernelestimate}. 

\begin{proof}[Proof of Theorem \ref{kernelestimate}]
By Corollary \ref{upperestimate}, 
\begin{align*}
&|K_{\lambda}(x,y;t)|\le C_{\lambda}'e^{-\Re\coth t(\tr x+\tr y)}
\left(1+\left|\frac{1}{\sinh t}\sqrt{P(x^{\frac{1}{2}})y}\right|_1^{rk}\right)
e^{2\left|\Re\frac{1}{\sinh t}\sqrt{P(x^{\frac{1}{2}})y}\right|_1} \\
=&\,C_{\lambda}'e^{-\Re\coth t(\tr x+\tr y)}
\left(1+\frac{1}{|\sinh t|^{rk}}\tr\left(\sqrt{P(x^{\frac{1}{2}})y}\right)^{rk}\right)
e^{2\left|\Re\frac{1}{\sinh t}\right|\tr\left(\sqrt{P(x^{\frac{1}{2}})y}\right)} \\
\le&\,C_{\lambda,t}\exp\left(-\frac{\cosh u\sinh u}{\cosh^2u-\cos^2v}(\tr x+\tr y)\right)
\left(1+\sqrt{\tr x\tr y}^{rk}\right)\\
&\qquad\times\exp\left(\frac{\sinh u|\cos v|}{\cosh^2u-\cos^2v}(\tr x+\tr y)\right) \\
=&\,C_{\lambda,t}\left(1+(\tr x\tr y)^\frac{rk}{2}\right)\exp\left(-\frac{\sinh u}{\cosh u+|\cos v|}(\tr x+\tr y)\right)
\end{align*}
and this completes the proof. 
\end{proof}

\section*{Acknowledgements}
The author would like to thank T. Kobayashi for many helpful advices on the topic of this paper. 
He also thanks his colleagues for many helpful discussions.

\end{document}